\providecommand{\keywords}[1]
{
  \small    
  \textbf{\textit{Keywords---}} #1
}
\numberwithin{equation}{section}
\newcommand{\1}[1]{\mathds{1}_{\{#1\}}}
\newcommand{\A}{\mathcal{A}}
\newcommand{\As}{\hat{\mathcal{A}}}
\newcommand{\F}{{\mathscr{F}}}
\newcommand{\Fs}{{\hat{\mathscr{F}}}}
\newcommand{\Zd}{{\mathbb{Z}^{d}}}
\newcommand{\E}{\mathbb{E}}
\newtheorem{theorem}{Theorem}[section]
\newtheorem{conjecture}[theorem]{Conjecture}
\newtheorem{lemma}[theorem]{Lemma}
\newtheorem{prop}[theorem]{Proposition}
\theoremstyle{definition}
\newtheorem{definition}[theorem]{Definition}
\newtheorem{assumption}[theorem]{Assumption}
\newtheorem{remark}[theorem]{Remark}
\newcommand{\threepartdef}[6]
{
    \left\{
        \begin{array}{lll}
            #1 & \mbox{ if } #2 \\
            #3 & \mbox{ if } #4 \\
            #5 & \mbox{  } #6
        \end{array}
    \right.
}
\newcommand{\fourpartdef}[8]
{
    \left\{
        \begin{array}{lll}
            #1 & \mbox{ if } #2 \\
            #3 & \mbox{ if } #4 \\
            #5 & \mbox{ if } #6 \\
            #7 & \mbox{  } #8
        \end{array}
    \right.
}
\begin{document}
\title{\Large{General Contact Process with Rapid Stirring} }
\author{Segev Shlomov}
\author{Leonid  Mytnik *}
\affil{
segevs@campus.technion.ac.il,    leonidm@technion.ac.il \\ \vspace{1ex}

\small{Faculty of Industrial Engineering and Management
Technion — Israel Institute of Technology,
Haifa 32000, Israel}}

\maketitle

\begin{abstract}
We study the limiting behavior of an interacting particle
system evolving on the lattice $\Zd$ for $d\ge 3$. The model is known as the contact process with
rapid stirring. 
The process starts with a single particle at the origin. Each particle may die, jump to a neighboring site if it is vacant or split. In the case of splitting, one of the offspring takes the place of the parent while the other, the newborn particle, is sent to another site in $\Zd$ according to a certain distribution; if the newborn particle lands on an occupied site, its birth is suppressed.
We study the asymptotic behavior of the critical branching rate as the jumping rate (also known as the stirring rate) approaches infinity. 
\end{abstract}

\keywords{Contact processes, Rapid stirring, Interacting particle systems, Asymptotic behavior}

\textbf{AMS MSC 2010:} Primary 82C22, 60K35

\let\thefootnote\relax\footnotetext{*Supported by the Israel Science Foundation Grant 1704/18.}

\section{Introduction}

The basic contact process with rapid stirring has state space $S=\{0,1\}^{\Zd}$ where $1$ at site $x\in \Zd$ means that site $x$ is occupied by a particle and $0$ means that the site is empty.
Each particle dies with rate $1$ and splits at rate $\lambda$. If a split occurs at site $x$ then one of the children replaces its parent at site $x$ while the other child is sent to a site $y$ chosen uniformly within the nearest neighboring sites of $x$. In addition, each pair of neighboring states values exchanges at rate $N$ (stirring).

The asymptotic behavior of this and related processes was studied by Durrett and Neuhauser in \cite{durrett1994particle}. 
They obtained results about the existence of phase transition, for various systems, when the stirring rate goes to infinity.
Let $\lambda_{c}$ denotes the minimal branching rate for which the process survives with positive probability.
For the basic contact process, starting with a single particle at the origin, Durrett and Neuhauser, showed that $\lambda_c^N\rightarrow 1\;$ as $N\rightarrow\infty$.

Later on, Konno in \cite{konno1995asymptotic} improved the result of Durrett and Neuhauser by getting a more detailed description of the asymptotic behavior of $\lambda_c^N$. His main result is as follows.
For $N>0$ define:
\begin{equation*}
\phi_d(N) = \threepartdef 
{ \frac{1}{N^\frac{1}{3}}} {d=1,}
{\frac{\log{N}}{N}} {d=2,}
{\frac{1}{N}} {d>2.}
\end{equation*}
Then we have
\begin{equation}\label{eq:3}
\lambda_c^N\approx 1+C_*\phi_d(N).
\end{equation}
Here $\approx$ means that if $C_*>0$ is small (large) then the right-hand side of (\ref{eq:3}) is a lower (upper) bound of the left-hand side for all large enough $N$.
After that, Katori in \cite{katori1994rigorous} got bounds on $C_*$ for dimensions $d\ge 3$. He showed:
\begin{align*}
    \frac{1}{2d(2d-1)}\le\liminf_{N\to\infty}(\lambda_{c}^{N}-1)N\le\limsup_{N\to\infty}(\lambda_{c}^{N}-1)N\le \frac{G(0,0)-1}{2d},
\end{align*}
where $G(\cdot,\cdot)$ is the Green function of the simple random walk on $\Zd$.

Later on, Berezin and Mytnik in \cite{berezin2014asymptotic} improved the results of Konno and Katori by getting more precise information on the asymptotic behavior of $\lambda_c^N$ in dimensions $d\ge 3$. They improved the lower bound on $\lambda_{c}^{N}$ and by using Katori's result in \cite{katori1994rigorous} they got the sharp asymptotic behavior of the critical $\lambda_{c}$. Their main result was:
\begin{equation}\begin{aligned}
\lim_{N\to\infty}\frac{\lambda_c^N-1}{\frac{\vartheta}{N}}=1, \;\;\;\;\;\;
\text{where }\;\vartheta=\frac{G(0,0)-1}{2d}\; .
\end{aligned}\end{equation}

Finally, Levit and Valesin in \cite{levit2017improved} studied the same model for $d=2$ and established a  lower bound for $\lambda_c^N$. Their main result for $d=2$, was
\begin{equation*}
     \liminf_{N\to\infty}\frac{\lambda_{c}^{N}-1}{\frac{\log{N}}{N}}\ge \frac{1}{4\pi}.
\end{equation*}

\noindent The aim of this paper is to study the limiting behavior of $\lambda_{c}^{N}$ for a more general case. In particular, we generalize the model as follow: the location of one of the offspring is distributed according to some symmetric, probability function $P^{b}$ which satisfies Assumption \ref{assump} below, and not according to the uniform distribution on its $2d$ closest neighboring as in the nearest neighbor contact process.\\
 
The rest of the paper is organized as follows: In Section \ref{TheModel} we present the model and state our main result. Section \ref{Proof of Main Result} is dedicated to the proof of our main result. This proof is based on two propositions which we prove in Sections \ref{mainPropSection} and \ref{vartheta}.

\section{Model and Main Result}\label{TheModel}
The contact process with rapid stirring is defined on the lattice $\Zd$. Let $N$ be a \say{large} integer parameter, which in the sequel we will send to infinity.
The state of the process at time $t$ is given by the function $\xi_t^N : \Zd \to{\{0,1\}}$, where the value of $\xi_t^N(x)$ represents the number of particles presented at $x$ at time $t$ (it equals zero or one).
Starting with a single particle at the origin, the basic rules for the evolution of the process are:

\begin{enumerate}

  \item particles die at rate $1$ without producing offspring;
  \item particles split into two at rate $\lambda=1 + C(N)$, where $C(N)\xrightarrow{N\rightarrow\infty}0$. If a split occurs at $x\in{\Zd}$, then one of the offspring replaces the parent, while the other is sent to a site $y$ with probability $P^{b}(x,y)$. If a newborn particle lands on an occupied site, its birth is suppressed;
  \item for each $x,y\in{\Zd}$ such that $||x-y||_1=1$ the values of $\xi^N$ at $x$ and $y$ are exchanged at rate $N$;
  \item the above mechanisms are independent.
\end{enumerate}
\begin{remark}
   We say that events occur at some rate if the times between events are independent and exponentially distributed with that rate.
\end{remark}
In the rest of this paper we will frequently use the following assumption about the probability function $P^{b}(x,y)$.
\begin{assumption}\label{assump}
Let $P^{b}(x,y)$ be a function of $||x-y||_{1}$. It means that there exists a function $f$ such that  $P^{b}(x,y)=f(||x-y||_{1})$.
\end{assumption}

Before we present the main result let us introduce the following notation:
\begin{itemize}
\item Let $|\xi_t^N|=\sum_{x\in \Zd}\xi_{t}^{N}(x)$ denote the total number of particles at time $t$.
\item Let $\Omega_{\infty}^{N}=\{|\xi_t^N|>0\;\;\;\forall t\ge0\}$ denote the event that the process survives.
\item Let $\rho_\lambda^N=P(\Omega_{\infty}^{N})$ denote the probability that the process survives.
\item Let $\lambda_c^N=\inf\{\lambda\ge0:\rho_\lambda^N>0\}$ denote the minimal branching rate such that the process survives with positive probability (also known as the critical branching rate). 
\end{itemize}

Note that $\rho_\lambda^N$ describes the probability of survival of the process. For all $\lambda$ smaller than $\lambda_c^N$ the process dies out almost surely, and for $\lambda$ greater than $\lambda_c^N$ the process survives with probability $\rho_\lambda^N$.


Our main result gives a lower bound on the critical branching rate. It is presented in the following theorem.
\begin{theorem}\label{mainTHM}
Let $d\ge 3$ and assume that $P^{b}(x,y)$ satisfies Assumption \ref{assump}. Then,
    \begin{equation*}
        \liminf_{N\to\infty}\frac{\lambda_{c}^{N}-1}{\frac{\vartheta}{N}}\ge 1,
    \end{equation*}
    where
    \begin{equation}\label{thetadef}
    \vartheta\equiv
    (2d)^{-1}\sum_{x,y\in \Zd}P^{b}(0,x)P^{b}(0,y)G(x,y).
    \end{equation}
\end{theorem}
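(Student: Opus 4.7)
The plan is to fix $\varepsilon \in (0,1)$ and prove that, for every sufficiently large $N$, the process with $\lambda = 1 + C(N)$ and $C(N) \leq (1-\varepsilon)\vartheta/N$ dies out; letting $\varepsilon \downarrow 0$ then yields the theorem. Since $\rho_{\lambda}^{N} \leq \p(|\xi_{t}^{N}| \geq 1) \leq \E|\xi_{t}^{N}|$ for every $t \geq 0$, it suffices to show that $\E|\xi_{t}^{N}| \to 0$ as $t \to \infty$ for such $\lambda$ and large $N$.

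Applying the generator to $|\xi| = \sum_{x}\xi(x)$ and using that the stirring is conservative, one obtains
\begin{equation*}
  \frac{d}{dt}\,\E|\xi_{t}^{N}| \;=\; (\lambda - 1)\,\E|\xi_{t}^{N}| \;-\; \lambda \sum_{x,y\in\Zd} P^{b}(0,\,x-y)\,\E\!\left[\xi_{t}^{N}(x)\,\xi_{t}^{N}(y)\right].
\end{equation*}
Since $\lambda - 1 \leq (1-\varepsilon)\vartheta/N$, the entire proof reduces to establishing a matching lower bound on the weighted two-point function,
\begin{equation*}
  \sum_{x,y\in\Zd} P^{b}(0,\,x-y)\,\E\!\left[\xi_{t}^{N}(x)\,\xi_{t}^{N}(y)\right] \;\geq\; \bigl(1-\tfrac{\varepsilon}{2}\bigr)\,\frac{\vartheta}{N}\,\E|\xi_{t}^{N}|,
\end{equation*}
uniformly in $t$ (for all sufficiently large $N$); combining the two inequalities then gives $\tfrac{d}{dt}\E|\xi_{t}^{N}| \leq -\delta_{N}\,\E|\xi_{t}^{N}|$ with $\delta_{N} > 0$ of order $1/N$, hence the claimed decay. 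I expect this lower bound to be essentially the content of the first of the two propositions referenced in Section \ref{Proof of Main Result}, with the second identifying the emerging constant as $\vartheta$ in \eqref{thetadef}.

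The heuristic behind the pair-correlation estimate is as follows. A freshly born pair sits at relative offset $y \sim P^{b}(0,\cdot)$; each particle then performs, to leading order, an independent continuous-time simple random walk of total rate $2dN$, so the relative-position walk runs at rate $4dN$. Its expected occupation time at offset $x$ is therefore approximately $G(x,y)/(4dN)$, with $G$ the discrete Green's function of simple random walk on $\Zd$, finite because $d \geq 3$. Averaging the initial offset $y$ and the target offset $x$ against $P^{b}(0,\cdot)$, and including a factor $2$ from the symmetric roles of the two members of the pair (either may attempt a split aimed at the other), reproduces exactly the constant $\vartheta/N$ in \eqref{thetadef}.

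The main obstacle is to make this heuristic rigorous. A direct ODE for the two-point function opens an unclosed BBGKY-type hierarchy involving three-point correlations, so some additional tool is needed. Two natural routes are: (i) a duality/graphical-construction argument relating $\E[\xi_{t}^{N}(x)\xi_{t}^{N}(y)]$ to hitting probabilities of a small backward system of random walks with killing; or (ii) a coupling of a tagged pair of particles to two independent rate-$2dN$ simple random walks, with quantitative control of the discrepancies caused by the exclusion interaction, by death, by further splittings, and by encounters with the rest of the cloud. In either route the delicate quantitative task is to show that these discrepancies are $o(1/N)$ uniformly in $t$, so that the sharp constant $\vartheta$ survives in the limit $N \to \infty$.
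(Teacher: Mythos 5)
Your overall plan is the same as the paper's: set $\lambda = 1 + \theta/N$ with $\theta < \vartheta$, show $\E|\xi_t^N| \to 0$ as $t \to \infty$ uniformly in large $N$, and conclude extinction. Your reduction to a lower bound on the weighted two-point function and your heuristic identification of the constant $\vartheta$ (relative-position walk at rate $\sim 4dN$, occupation time governed by $G$, averaged over $P^b(0,\cdot)$) are both exactly the content of the paper's Proposition~\ref{varthetaprop}. Where the proposal stops short is precisely at the point you flag as ``the delicate quantitative task'': you propose either a duality argument or a coupling of a generic tagged pair with uniform-in-$t$ error control, and worry about an unclosed BBGKY hierarchy. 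The paper sidesteps both issues with a device you have not identified, and that device is really the crux of Proposition~\ref{mainprop}.

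The key observation is that only a \emph{lower} bound on $\sum_\ell \frac{p_\ell}{h_\ell}\hat I^{\ell,N}_s$ is needed, and the paper obtains one by discarding almost all pairs: it counts only pairs of particles that are \emph{siblings} whose common parent split during the short window $[s-\tau_N, s]$, with $\tau_N = \ln N/N^2$. For such a pair, the relative displacement after birth is a single object one can track over a time interval of length at most $\tau_N$; the stirring dynamics of this displacement is coupled to a modified walk $W^N$ and compared to a free walk $V^N$ via Lemmas~\ref{first_step_cond}--\ref{discVScontRW}, giving $\E[Z_1^\ell]$ and hence the constant $\vartheta$. Because the window is so short, no uniform-in-$t$ discrepancy estimate between the true pair correlation and an i.i.d.\ approximation is ever required. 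The only leakage from the rest of the cloud is that one of the two newborn siblings may be suppressed \emph{at birth} by landing on an occupied site; this is the $\zeta_\alpha$ term of Lemma~\ref{bound zeta integral}, which is itself bounded by $\hat I^{\ell,N}$ integrated over a window of length $\tau_N$ and multiplied by a small factor $O(N\tau_N)$, and is then absorbed by the same Gr\"onwall argument that closes the proof. Three-point correlations never appear. So your route (ii) is in the right spirit, but to make it work you would need to add the restriction to recently-born sibling pairs (turning the problem from ``control an error'' into ``drop everything else''), and to notice that the residual interaction with the cloud manifests only as birth-time suppression, which is of smaller order than the main term.
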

Our main theorem shows that the asymptotic behavior of the critical rate is at least $1+\frac{\vartheta}{N}$.

\begin{conjecture}\label{conj}
We conjecture that in our general model the lower bound is tight. That is, we conjecture that $\limsup_{n\to\infty}\frac{\lambda_{c}^{N}-1}{\frac{\vartheta}{N}}=1$ and thus,
\begin{equation*}
    \lim_{n\to\infty}\frac{\lambda_{c}^{N}-1}{\frac{\vartheta}{N}}=1.
\end{equation*}
\end{conjecture}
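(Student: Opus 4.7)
The plan is to establish the matching upper bound $\limsup_{N\to\infty}(\lambda_c^N-1)/(\vartheta/N)\le 1$, which combined with Theorem~\ref{mainTHM} yields the conjecture. Equivalently, for every $\varepsilon>0$ I would show that with $\lambda_N := 1+(1+\varepsilon)\vartheta/N$ the process survives with positive probability for all sufficiently large $N$. My strategy is a two-step scheme in the spirit of Katori~\cite{katori1994rigorous}: first a moment/hydrodynamic analysis on an intermediate time scale, then a block renormalization that upgrades this one-step estimate to stochastic domination by a supercritical oriented percolation.

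The key quantitative input is the evolution of the first moment $m_t^N := \E|\xi_t^N|$. Since stirring preserves mass, the generator computation yields
\begin{equation*}
\frac{d}{dt}m_t^N = (\lambda_N-1)\,m_t^N - \lambda_N\!\sum_{x,y\in\Zd}\!P^b(x,y)\,\phi_t^N(x,y),
\end{equation*}
where $\phi_t^N(x,y) := \p(\xi_t^N(x)=\xi_t^N(y)=1)$. The core step is to show that, after a short mixing transient, the subtracted term is asymptotically $(\vartheta/N)\cdot m_t^N$ times the appropriate local density. Under rapid stirring, the trajectories of a sibling pair (produced together at a split, with initial displacement distributed according to $P^b$) are two nearly-independent continuous-time random walks of stirring rate of order $N$, and the time-integrated meeting intensity, weighted by the birth kernel, is precisely $\vartheta/N$---this is exactly the content of definition~\eqref{thetadef}, with $G$ playing the role of the integrated two-walk Green function weighted by $P^b\otimes P^b$ in accordance with Assumption~\ref{assump}.

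Once the meeting rate is identified, the effective growth rate of $m_t^N$ equals $\varepsilon\vartheta/N\cdot(1+o(1))>0$, so $m_t^N$ grows exponentially on a time scale $T_N\gg N/\vartheta$. A matching second-moment bound $\E|\xi_{T_N}^N|^2 \le C\,(m_{T_N}^N)^2$ then yields, via Paley--Zygmund, that at time $T_N$ the process has at least a constant multiple of $m_{T_N}^N$ particles spread over a box of side of order $\sqrt{NT_N}$ with positive probability. One then runs the Bezuidenhout--Grimmett block argument to upgrade this one-step estimate to stochastic domination of the rescaled process by a supercritical oriented percolation on macroscopic space-time blocks, which yields positive survival probability and hence $\lambda_c^N\le \lambda_N$.

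The main obstacle is the rigorous analysis of the correction term for a general kernel $P^b$. In the nearest-neighbor case of~\cite{katori1994rigorous}, explicit simple-random-walk Green function identities reduce $\sum P^b(x,y)\phi_t^N(x,y)$ to a closed form in $G(0,0)$. For $P^b$ satisfying Assumption~\ref{assump} one instead needs (i) uniform control on $\phi_t^N$ through the transient regime where local equilibrium has not yet been reached, and (ii) a BBGKY-type closure showing that three- and higher-point correlations contribute only lower-order terms to the first- and second-moment equations. Both require extending the duality/coupling machinery of~\cite{berezin2014asymptotic} so that the $P^b$-weighted Green function in~\eqref{thetadef} appears in place of the nearest-neighbor constant $(G(0,0)-1)/(2d)$, and it is precisely this generalization---not the subsequent block argument---that should be the technical heart of proving the conjecture.
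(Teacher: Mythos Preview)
The statement you are attempting to prove is \emph{not} proved in the paper: it is explicitly labelled Conjecture~\ref{conj} and left open. The paper establishes only the lower bound (Theorem~\ref{mainTHM}) and motivates the conjecture by noting that the analogous upper bound was obtained by Katori in the nearest-neighbour case. Consequently there is no ``paper's own proof'' to compare your proposal against.

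As a research outline your plan is the natural one and broadly consistent with how the matching upper bound was obtained in the nearest-neighbour setting: identify the effective loss rate due to sibling collisions as $\vartheta/N$, show that for $\lambda=1+(1+\varepsilon)\vartheta/N$ the first moment grows, pair this with a second-moment bound, and feed the resulting one-block estimate into a Bezuidenhout--Grimmett block construction. However, what you have written is a sketch, not a proof: you yourself flag the two substantive gaps, namely (i) uniform control of the two-point function $\phi_t^N$ through the transient before local equilibrium, and (ii) the closure of the moment hierarchy showing that higher correlations are negligible for general $P^b$. Neither of these is routine for a kernel with unbounded support satisfying only Assumption~\ref{assump}; in particular, the Katori argument relies on explicit nearest-neighbour Green-function identities that have no direct analogue here, and the block construction requires finite-range or at least sufficiently light-tailed interactions to make the renormalized events local. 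Until those steps are actually carried out, the conjecture remains open, and your write-up should be read as a plausible strategy rather than a proof.
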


\subsection*{Special case considered in \cite{berezin2014asymptotic}}
Let us show that our main result is consistent with the result of Berezin and Mytnik in \cite{berezin2014asymptotic} who considered a special case of our model.
In \cite{berezin2014asymptotic} the same model is considered with a particular distribution of the location of a particle after splitting: when a particle splits its child's location is distributed uniformly over its closest neighbors. That is, the branching probability function is $P^{b}(y,x)=(2d)^{-1}\1{|x-y|=1}$).
Let $\varphi_{d}=\{x\in\Zd : ||x||_{1}=1\}$ denote the set of nearest neighbours of the origin. Thus, according to (\ref{thetadef}),
\begin{align*}
    \vartheta&=
    (2d)^{-1}\sum_{x\in \varphi_{d}}\sum_{y\in \varphi_{d}}\frac{1}{2d}\frac{1}{2d}G(x,y).
\end{align*}
For any $x\in\Zd$, let $G(x,\varphi_{d})=\sum_{y\in\varphi_{d}}G(x,y) $ denote the expected number of times a simple symmetric nearest neighbour random walk visits at the set of sites $\varphi_{d}$, given that it started from site $x$. Thus,
\begin{align*}
    \vartheta&=
    (2d)^{-1}\sum_{x\in \varphi_{d}}\frac{1}{2d}\frac{1}{2d}G(x,\varphi_{d})
    =(2d)^{-2}\sum_{x\in \varphi_{d}}\frac{1}{2d}G(x,\varphi_{d}).
\end{align*}
Notice that when a simple symmetric nearest neighbour random walk jumps from the origin, then with probability $\frac{1}{2d}$ it jumps to a site $x$ in $\varphi_{d}$. Thus, by using first step analysis we get, 
\begin{align*}
    \vartheta =  (2d)^{-2}G(0,\varphi_{d})
    =\frac{G(0,0)-1}{2d}
\end{align*}
which coincides with the result obtained by Berezin and Mytnik in Theorem 3 in \cite{berezin2014asymptotic}.

Note that the lower bound in \cite{berezin2014asymptotic} is tight. This motivates our Conjecture \ref{conj}.

\section{Proof of Main Result}\label{Proof of Main Result}

Before we start the proof of the main result let us introduce an additional notation. 
      For any $x,y\in \Zd$ we say that $x,y$ are $\ell$-neighbours if $\;{||x-y||_{1}=\ell}$. In that case, we denote it by $x\underset{\ell}{\sim}y\;$.
      let $h_{\ell}^{d}=\sum_{x\in \Zd}1_{\{0\underset{\ell}{\sim}y\}}$ be the total number of $\ell$-neighbours of the origin.

Recall that in this work we focus on the probability function $P^{b}(x,y)$ that satisfies Assumption \ref{assump}, which means that it depends only on the distance (in $l_{1}$-norm) between $x$ and $y$. Thus, we may write
 \begin{equation}
     P^{b}(x,y)=\sum_{\ell}\frac{p_{\ell}}{h_{\ell}^{d}}\cdot 1_{\{x\underset{\ell}{\sim}y\}}
 \end{equation}
for some discrete distribution $p_{\ell}$ on $\mathbb{N}$.
This means that whenever a particle splits one of its offspring jumps to an $\ell$-neighbor with probability $p_{\ell}$ and its position is distributed uniformly within all the $\ell$-neighbors. We will also frequently use the following notation:
 \begin{itemize}
    \item We denote particles by Greek letters $\alpha,\beta,\gamma$.
     \item  Since we start with a single particle and each particle may split into two we use a binary vector representation of particles. For example, particle $(1)$ is the first particle, the one we start the process with; particles (1,0) and (1,1) are the two children of particle (1) and so on.
     \item For each particle $\alpha$ let $\mathscr{C}_{0}(\alpha)$ and $\mathscr{C}_{1}(\alpha)$ denote the two children of particle $\alpha$. For example, if $\alpha=(1,1)$ then $\mathscr{C}_{0}(\alpha)=(1,1,0)$ and $\mathscr{C}_{1}(\alpha)=(1,1,1)$. In addition, we say that particle $\beta$ is the child of particle $\alpha$ if $\beta=\mathscr{C}_{0}(\alpha)$ or $\beta=\mathscr{C}_{1}(\alpha)$.
    \item For each particle $\alpha$ let  $\mathscr{P}(\alpha)$ denote the parent of particle $\alpha$. In addition, we say that particle $\alpha$ is the parent of particle $\beta$ if $\mathscr{P}(\beta)=\alpha$.
 \end{itemize}

In order to prove Theorem \ref{mainTHM} we set the branching rate $\lambda=1+\frac{\theta}{N}$ and we show that for any $\theta<\vartheta$ the process dies out with probability one, which gives a lower bound for the critical branching rate. It will be more convenient to deal with the speeded-up process
\begin{equation}\label{speed-pro}
    {\hat{\xi}_{t}^{N}}={\xi}_{N t}^{N}.
\end{equation} This process obeys the same rules as ${\xi}_{t}^{N}$ with the exception that all events occur with rates multiplied by $N$. Thus, particles die with rate $N$, split with rate $\theta+N$ and perform stirring with rate $N^{2}$.\\

The next proposition is crucial for the proof of the main Theorem \ref{mainTHM}. It is proved in Section \ref{mainPropSection}.
\begin{prop}\label{mainprop}
Fix an arbitrary $\theta<\vartheta$. Then there exists $N_{\theta}>0$ such that for any $N\ge N_{\theta}$ and for any $t\ge 0$,
    \begin{equation}
    \hat{m}_{t}^{N}:=\E(|\hat{\xi}_{t}^{N}|)\le 
    \exp{\left[
    \left(\frac{1}{2}(\theta-\vartheta)\right)t+2
    \right]}.
    \end{equation}
\end{prop}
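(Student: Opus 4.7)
The plan is to derive a differential inequality of the form $\frac{d}{dt}\hat{m}_t^N \le \frac{1}{2}(\theta-\vartheta)\hat{m}_t^N$ valid for all $t\ge 0$ once $N$ is large enough, then apply Gronwall. Since $\theta<\vartheta$ this drives exponential decay at rate $\tfrac12(\theta-\vartheta)<0$; the additive constant $2$ in the exponent is slack for the initial regime (of length $O(1/N^{2})$) during which the two-particle correlations have not yet equilibrated, together with the $o(1)$-errors arising as $N\to\infty$.

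Writing $u_t^N(x)=\E[\hat{\xi}_t^N(x)]$ and $q_t^N(x,y)=\E[\hat{\xi}_t^N(x)\hat{\xi}_t^N(y)]$, applying the generator of the speeded-up process to $\hat{\xi}_t^N(x)$ yields, after taking expectations,
\begin{equation*}
    \frac{d}{dt}u_t^N(x) \;=\; N^{2}\Delta u_t^N(x) \;-\; N u_t^N(x) \;+\; (\theta+N)\sum_{y}P^b(x-y)\bigl[u_t^N(y) - q_t^N(x,y)\bigr],
\end{equation*}
where the $q_t^N$ term isolates the contribution of births suppressed because the target site is occupied. Summing over $x\in\Zd$, the discrete Laplacian drops out and $\sum_{y}P^b(x-y)=1$ gives
\begin{equation*}
    \frac{d}{dt}\hat{m}_t^N \;=\; \theta\,\hat{m}_t^N \;-\; (\theta+N)\sum_{x,y}P^b(x-y)\,q_t^N(x,y).
\end{equation*}
The pure-branching term $\theta\hat{m}_t^N$ is opposed by the suppression correction $R_t^N := (\theta+N)\sum_{x,y}P^b(x-y)q_t^N(x,y)$, which must be lower-bounded.

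The key step is a genealogical analysis of $R_t^N$. Each ordered pair of distinct alive particles at time $t$ has a most recent common ancestor whose split at some time $t-s$ produced two children at relative displacement $w\sim P^b(0,\cdot)$. Under the rapid-stirring hypothesis the two descendant lines perform approximately independent continuous-time random walks of rate $2dN^{2}$, so their relative displacement at time $t$ is a rate-$4dN^{2}$ random walk started at $w$. The $P^b$-weighted integrated relative occupation density is then
\begin{equation*}
    \sum_{w,z}P^b(0,w)\,P^b(0,z)\int_{0}^{\infty}p_{4dN^{2}\sigma}(0,z-w)\,d\sigma \;=\; \frac{1}{4dN^{2}}\sum_{w,z}P^b(0,w)P^b(0,z)\,G(0,z-w) \;=\; \frac{\vartheta}{2N^{2}},
\end{equation*}
where $\vartheta$ is precisely the constant of \eqref{thetadef} (the factor $2d$ in the Green-function sum cancels the $(2d)^{-1}$ in the definition of $\vartheta$). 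Multiplying by the sibling-pair creation rate $(\theta+N)\hat m_{t-s}^N$ and integrating in $s$ yields, at the heuristic level, $R_t^N\approx \vartheta\,\hat m_t^N$.

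The main obstacle is making this sibling-based heuristic rigorous despite (i) suppression breaking the clean branching Markov structure, and (ii) interactions between sibling subtrees through shared ancestors and shared occupancy constraints. I would restrict attention to MRCA times in a short window $[t-T_N,t]$ with $T_N=o(1)$ but $T_N N^{2}\to\infty$, chosen so that the relative walk has time to equilibrate (giving the full Green-function contribution) while the probability that either sibling dies or re-splits within the window is $O(NT_N)=o(1)$. A monotone coupling with the pure branching random walk (obtained by ignoring suppression) gives the a priori bound $\hat m_{s}^N\le e^{\theta s}$ and shows that the occupation densities inside the window are distorted by suppression by at most a factor $1+o(1)$. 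This yields a rigorous lower bound $R_t^N\ge \frac{1}{2}(\theta+\vartheta)\hat m_t^N$ for $t\ge T_N$ and all $N\ge N_\theta$, where the factor $\frac{1}{2}$ provides comfortable slack to absorb the random-walk equilibration error, the sibling-only truncation, and the residual suppression effects. On the initial interval $t\le T_N$ the BRW comparison gives $\hat m_t^N\le e^{\theta T_N}\le e^{2}$, which is absorbed into the additive $+2$ in the exponent. Substituting the inequality into the ODE and applying Gronwall then completes the proof.
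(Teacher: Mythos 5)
Your overall architecture matches the paper's: derive the exact balance equation for $\hat m_t^N$ (your $R_t^N$ is the paper's $(1+\theta/N)\sum_\ell \frac{p_\ell}{h_\ell}N\hat I_t^{\ell,N}$), lower-bound the collision term by restricting to sibling pairs created in a short window $[t-T_N,t]$ with $T_N=o(1)$ but $N^2T_N\to\infty$ (the paper takes $T_N=\tau_N=\ln N/N^2$), identify the limiting constant via a Green-function computation, control the residual suppression error, and close with Gr\"onwall plus the a priori bound $\hat m_s^N\le e^{\theta s}$. All of that is the same path as the paper's Lemmas \ref{Differential equation}--\ref{first m} and Proposition \ref{varthetaprop}.

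There is, however, one genuine gap in the Green-function step. You assert that the two sibling lines perform ``approximately independent'' rate-$2dN^2$ walks, so the relative displacement is a rate-$4dN^2$ simple random walk, and you then integrate the simple-random-walk kernel to recover $G$ and hence $\vartheta$. This is not what the dynamics gives. Because the two particles interact through the stirring mechanism (when they are nearest neighbours the clock for the edge between them \emph{swaps} them rather than giving each an independent move), the relative displacement $D_t$ is not a simple random walk: it has total jump rate $(4d-1)N^2$ on $\varphi_d$ instead of $4dN^2$, and it never visits $0$. This is exactly the modified chain $W_t^N$ that the paper introduces, as opposed to the simple walk $V_t^N$. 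The discrepancy is not a vanishing correction that can be swallowed by the ``comfortable slack'' factor $\tfrac12(\vartheta-\theta)$: it is an $O(1)$ alteration of the occupation measure in exactly the region ($\varphi_d^\ell$ with $\ell$ small) that produces the Green-function contribution, and it is $N$-independent after rescaling by $N^{2}$. What saves the argument is the non-obvious identity $\E\!\int_0^t \mathds 1_{\{W_s^N\in\varphi_d^\ell\}}\,ds = \E\!\int_0^t \mathds 1_{\{V_s^N\in\varphi_d^\ell\}}\,ds$ (the paper's Lemmas \ref{A}--\ref{W and V}), which rests on a first-step computation showing both chains exit $\varphi_d\cup\{0\}$ with the same distribution over the two classes of exit points, together with a coupling of the sojourn times inside $\varphi_d$. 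Without proving this equivalence (or an asymptotic version of it), the limit you compute is the Green function of $W$, which is not a priori equal to $G$, so the constant $\vartheta$ in the statement is not justified. Everything else in your sketch (the generator computation, the $P(F_1)$ window probability, the Dominated Convergence to pass $N\to\infty$ inside $\sum_\ell$, the Gr\"onwall closing) is consistent with the paper; this one identification of the relative-displacement process is the missing ingredient.
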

With Proposition \ref{mainprop} at hands we are ready to give the proof of Theorem \ref{mainTHM}.
\begin{proof}[Proof of Theorem \ref{mainTHM}\nopunct].
From Proposition \ref{mainprop} it follows that for every $\theta<\vartheta$, uniformly on all $N$ sufficiently large, the expected number of particles approaches zero (exponentially fast) when $t$ approaches infinity. Since the number of particles is a non-negative integer random variable then with probability one the process dies out for every  $\theta<\vartheta$. This proves Theorem  \ref{mainTHM}.
\end{proof}

\section{Proof of Proposition \ref{mainprop}}\label{mainPropSection}

In order to prove Proposition \ref{mainprop} we are going to build the differential equation for the expected number of particles. Then, we bound the expected number of $\ell$-neighbors from below by looking only at pairs of particles which are parents and children. After that, we
bound from above the total mass of the process by ignoring collisions between distant relatives. 

\subsection{Differential Equation for the Expected Number of Particles}
The first ingredient in the proof of Proposition \ref{mainprop} is a derivation of the equation for the expected number of particles. To do this we need the following notation:
\begin{itemize}
    \item Let $\A_{t}$ denote the set of all particles alive at time $t$ in the process $\xi_{t}^{N}$.
    \item Let $\F_{t}$ denote the natural filtration of the process $\xi_{t}^{N}$ until time $t$. 
    \item Let $m_{t}^{N}=\E(|\xi_{t}^{N}|)$ denote the expected number of particles at time $t$ and let ${\hat{m}_{t}^{N}=m_{Nt}^{N}}$.
    \item Let $I_{t}^{\ell,N}=\E[\sum_{x\underset{\ell}{\sim}y}\xi_{t}^{N}(x)\xi_{t}^{N}(y)]$ denote twice the expected number of $\ell$-neighbours alive at time $t$ and let $\hat{I}_{t}^{\ell,N}=I_{Nt}^{\ell,N}$.
\end{itemize}
In addition, recall that in that process $\xi_{t}^{N}$, each particle splits at rate $1+\frac{\theta}{N}$, dies at rate $1$ and jumps to each one of its closest vacant neighbouring sites with rate $N$. Since each particle has $2d$ closest neighbours the total jumping rate of a particle is $2dN$. Thus, 
\begin{equation*}
    \mu := 2+\frac{\theta}{N}+2dN
\end{equation*}
denotes the total rate of events (splitting, dying and stirring) that can occur for each alive particle.\\

Before we present the differential equation for the expected number of particles in the speeded-up process (see (\ref{speed-pro})) let us state a lemma which will be useful for the rest of the paper.
\begin{lemma}\label{finiteM}
For any finite time $t$ and any stirring rate $N$ the following holds:
 \begin{align*}
     &(1)\;\;\;\;P(|\xi_{t}^{N}|<\infty)=1 \;\;\text{and}\;\; m_{t}^{N}\le e^{\mu t}<\infty;
     \\&(2)\;\;\;\;P(|\hat{\xi}_{t}^{N}|<\infty)=1 \;\;\text{and}\;\; \hat{m}_{t}^{N}\le e^{\mu Nt}<\infty.
 \end{align*}
 \end{lemma}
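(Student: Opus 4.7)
The plan is to dominate the total particle count $|\xi_t^N|$ by a pure-birth (Yule) process with per-particle branching rate $\mu$. The key observation is that among the three mechanisms governing the dynamics---splitting, death, and stirring---only splitting can increase the total count, while deaths strictly decrease it and stirring leaves it unchanged. Hence $|\xi_t^N|$ should be stochastically dominated by a branching process in which every particle simply splits at rate $\mu$ and no particle ever dies.

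Concretely I would run the following coupling. Since the total event rate at each alive particle in $\xi^N$ equals $\mu$, I attach to each particle an independent Poisson clock of rate $\mu$ and, at each ring, choose the event type---split, death, or one of the $2d$ stirrings---with the appropriate conditional probabilities. In parallel I run a Yule process $Z_t$ with per-particle split rate $\mu$, coupled so that every ring of the $\xi^N$-clock of an active particle is also a branching time of the corresponding $Z$-particle; newborn $Z$-particles that have no $\xi^N$-counterpart simply carry independent clocks. Every $\xi^N$-event changes the count by at most $+1$ whereas every $Z$-event raises it by exactly $+1$, so $|\xi_t^N|\le Z_t$ pathwise by induction on the ordered event times. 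Classical theory of the Yule process gives $\E[Z_t]=e^{\mu t}<\infty$ and $P(Z_t<\infty)=1$ for every finite $t$, which establishes part (1). Part (2) then follows immediately from the time change $\hat\xi_t^N = \xi_{Nt}^N$, yielding $\hat m_t^N = m_{Nt}^N\le e^{\mu N t}$ and $P(|\hat\xi_t^N|<\infty)=1$.

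The main obstacle is handling the a priori possibility $|\xi_t^N|=\infty$, which would render both the coupling and any naive generator manipulation vacuous. The standard remedy is truncation: fix $K$ and perform the construction only up to the stopping time $\tau_K := \inf\{t : |\xi_t^N|\ge K\}$. On $[0,\tau_K]$ the total event rate is bounded by $K\mu$, the graphical construction is unambiguous, and the Yule domination gives $\E[|\xi_{t\wedge\tau_K}^N|]\le e^{\mu t}$. Since this bound is uniform in $K$, Fatou's lemma as $K\uparrow\infty$ yields $m_t^N\le e^{\mu t}<\infty$, and in particular $P(|\xi_t^N|<\infty)=1$. An equally valid alternative avoids the explicit Yule coupling by running a direct generator/Gr\"onwall estimate on the truncated process: bounding $|\xi_{t\wedge\tau_K}^N|\le 1+B_{t\wedge\tau_K}$, where $B_t$ counts attempted splits and has compensator $\int_0^t\lambda|\xi_s^N|\,ds$, yields the sharper bound $m_t^N\le e^{\lambda t}\le e^{\mu t}$.
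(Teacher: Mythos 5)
Your argument is correct and matches exactly what the paper indicates: the paper states the proof is "based on bounding $|\xi_t^N|$ by the Yule process" and omits it as standard, and your coupling with a Yule process of per-particle rate $\mu$, together with the truncation-and-Fatou step to rule out explosion, is precisely that standard argument carried out carefully. The time-change deduction of part (2) from part (1) is also the intended route.
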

The proof of this lemma is simple and is based on bounding $|\xi_{t}^{N}|$ by the Yule process. Since the proof is standard it is omitted.
The differential equation for the expected number of particles in the speeded-up process, $\{\hat{\xi}_{t}^{N}\}_{t\ge 0}$, is given in the next lemma.

\begin{lemma}\label{Differential equation}
\begin{equation}
    \hat{m}_{t}^{N}=\hat{m}_{0}^{N}+\int_{0}^{t} \theta\hat{m}_{s}^{N}ds- (1+\frac{\theta}{N})\sum_{\ell}{\frac{p_{\ell}}{h_l}\int_{0}^{t}N\hat{I}_{s}^{\ell,N}}ds.
\end{equation}
\end{lemma}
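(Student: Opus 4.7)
My plan is to derive the integral equation by applying Dynkin's formula to $|\xi_t^N|$ in the original (not speeded-up) process, then time-change the result. The finiteness statement from Lemma \ref{finiteM} guarantees that everything we write down is integrable, so the manipulations are legitimate.

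First I would identify the contributions of the three mechanisms to the generator of $|\xi_t^N|$. Stirring exchanges values at neighboring sites and hence does not change $|\xi_t^N|$, so it contributes $0$. Death removes a particle at rate $1$ per particle, contributing $-|\xi_t^N|$. A splitting event at an occupied site $x$ is attempted at rate $\lambda=1+\theta/N$ and succeeds (produces one new particle) only if the sampled target $y$ is currently empty, so the total expected rate of successful births is
\begin{equation*}
\lambda \sum_{x} \xi_t^N(x) \sum_{y} P^b(x,y)\bigl(1-\xi_t^N(y)\bigr)
= \lambda\,|\xi_t^N| - \lambda \sum_{x,y} P^b(x,y)\,\xi_t^N(x)\xi_t^N(y).
\end{equation*}
Combining with the $-|\xi_t^N|$ death term and using $\lambda-1=\theta/N$, Dynkin's formula gives
\begin{equation*}
m_t^N = m_0^N + \int_0^t \frac{\theta}{N}\,m_s^N\,ds
- \lambda \int_0^t \E\!\left[\sum_{x,y} P^b(x,y)\,\xi_s^N(x)\xi_s^N(y)\right]ds.
\end{equation*}

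Next I would insert the decomposition $P^b(x,y)=\sum_\ell (p_\ell/h_\ell^d)\,\1{x \underset{\ell}{\sim} y}$, which rewrites the double sum as $\sum_\ell (p_\ell/h_\ell^d)\sum_{x\underset{\ell}{\sim}y}\xi_s^N(x)\xi_s^N(y)$. Taking expectations identifies each inner sum with $I_s^{\ell,N}$, giving
\begin{equation*}
m_t^N = m_0^N + \int_0^t \frac{\theta}{N}\,m_s^N\,ds
- \lambda \sum_\ell \frac{p_\ell}{h_\ell^d}\int_0^t I_s^{\ell,N}\,ds.
\end{equation*}
Interchange of expectation, integration and summation is justified by Lemma \ref{finiteM}(1), which bounds $m_s^N$ (and hence $I_s^{\ell,N}\le (m_s^N)^2$ or simply $\E[|\xi_s^N|^2]\le e^{2\mu s}$ by the same Yule-process comparison) uniformly on compact time intervals.

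Finally I would substitute $t \mapsto Nt$, set $\hat m_t^N = m_{Nt}^N$ and $\hat I_t^{\ell,N} = I_{Nt}^{\ell,N}$, and change variables $u=s/N$ in both integrals. The $\int_0^{Nt}(\theta/N)m_s^N\,ds$ term becomes $\int_0^t \theta \hat m_u^N\,du$ (the factor $N$ from $ds=N\,du$ cancels the $1/N$), and the other integral $\int_0^{Nt} I_s^{\ell,N}\,ds$ becomes $\int_0^t N \hat I_u^{\ell,N}\,du$, producing exactly the displayed identity. There is no serious obstacle here; the only point requiring care is verifying the applicability of Dynkin's formula and Fubini, both of which follow from the finite exponential-in-time bounds in Lemma \ref{finiteM}.
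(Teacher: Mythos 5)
Your derivation is correct and is exactly the kind of argument the paper has in mind when it omits the proof and refers to Schinazi and Konno for the analogous computations: identify the generator contributions of stirring (zero), death ($-|\xi|$), and suppressed births ($\lambda|\xi| - \lambda\sum_{x,y}P^b(x,y)\xi(x)\xi(y)$), apply Dynkin's formula, decompose $P^b$ over $\ell$, and then undo the time change. The decomposition, the cancellation of $\lambda$ against $-1$ giving $\theta/N$, and the Jacobian bookkeeping under $s\mapsto Ns$ are all right. One small inaccuracy: $I_s^{\ell,N}\le (m_s^N)^2$ is false in general, since $I_s^{\ell,N}=\E\bigl[\sum_{x\underset{\ell}{\sim}y}\xi_s^N(x)\xi_s^N(y)\bigr]$ is bounded by $\E[|\xi_s^N|^2]$, not $(\E[|\xi_s^N|])^2$ (Jensen goes the wrong way). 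You already note the correct bound $\E[|\xi_s^N|^2]\le e^{2\mu s}$ via the Yule comparison, and in fact the cleaner domination is $\sum_{x\underset{\ell}{\sim}y}\xi_s^N(x)\xi_s^N(y)\le h_\ell^d|\xi_s^N|$, so that $\frac{p_\ell}{h_\ell^d}I_s^{\ell,N}\le p_\ell\, m_s^N$ sums nicely to justify the interchange. With that slip corrected, the proof stands.
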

The analogous results are frequently stated as standard (see page 152 in \cite{schinazi2012classical} and page 842 in \cite{konno1995asymptotic}) so the proof of the lemma is omitted. A complete proof of the lemma can be found in \cite{mythesis}.

\subsection{Bounding the expected number of $\ell$-neighbors}
Another ingredient in the proof of Proposition 3.1 is bounding the expected number of $\ell$-neighbors.
It is clear from Lemma \ref{Differential equation} that in order to bound from above the expectation of the total mass of the speeded-up process $\hat{\xi}_{t}^{N}$, we need to bound from below the expected number of $\ell$-neighbors for every $\ell$. The main idea that allows us to get a useful lower bound on this quantity at time $s$ is based on counting only particles that are $\ell$-neighbors at time $s$ and have a common parent that splits at some time in $[s-\tau_{N},s)$, where
\begin{equation}\label{tauN}
    \tau_{N}=\frac{\ln(N)}{N^{2}}.
\end{equation}
The reason for such a definition of $\tau_{N}$ is that two particles need the order of $N^{-2}$ units of time to travel \say{long} distance so that a collision between them, after this time, happens with very small probability.
This explains why we choose $\tau_{N}$ a little bit bigger than $N^{-2}$.
A similar technique is used in a number of papers (see \cite{berezin2014asymptotic},\cite{konno1995asymptotic},\cite{durrett1999rescaled}).
In Lemmas \ref{bound I} -- \ref{big} below, we will use this idea in order to bound from below the expected number of $\ell$-neighbors.

Before we proceed, we need an additional notation:
\begin{itemize}
    \item Let $\As(t)$ denote the set of all particles alive at time $t$ in the process $\hat{\xi}_{t}^{N}$.
    \item Let $\Fs_{t}$ denote the natural filtration of the process $\hat{\xi}_{t}^{N}$ until time $t$. 
    \item Let ${T}_{\alpha}$ denote the time at which particle $\alpha$ dies or splits in the process $\hat{\xi}_{t}^{N}$.
    \item Let $B_{t}^{\alpha}$ denote the position (on the lattice) of particle $\alpha$ at time $t$ in the process $\hat{\xi}_{t}^{N}$. If particle $\alpha$ is not alive at time $t$ (dead or not born yet) then we write $B_{t}^{\alpha}=\bigtriangleup$.
    \item Let $Z_{\alpha}^{\ell}(t)=1(T_\alpha\in [t,t+\tau_{N}], B_{t+\tau_{N}}^{\mathscr{C}_{1}(\alpha)}\underset{\ell}{\sim}B_{t+\tau_{N}}^{\mathscr{C}_{0}(\alpha)},{T}_{\mathscr{C}_{1}(\alpha)}\ge t+\tau_{N},{T}_{\mathscr{C}_{0}(\alpha)}\ge t+\tau_{N})$ denote the indicator of the event that the two children of particle $\alpha$ created during the time interval $[t,t+\tau_{N}]$ are $\ell$-neighbours at time $t+\tau_{N}$ and let $Z_{1}^{\ell}=Z_{1}^{\ell}(0)$.
    \item Let $\zeta_{\alpha}(t)$ denote the indicator of the event that one of $\alpha$'s children created during the time interval $[t,t+\tau_{N}]$ died at the time of its birth $T_{\alpha}$ as a result of collision.
\end{itemize}

\begin{lemma}\label{bound I} For any $s\ge\tau_{N}$ and for any $\ell=1,2,...$
\begin{equation*}
    \hat{I}_{s}^{\ell,N} \ge 2\hat{m}_{s-\tau_{N}}^{N}\E[Z_{1}^{\ell}]-2\E\left[\sum_{\alpha\in \As(s-\tau_{N})}\zeta_{\alpha}(s-\tau_{N})\right].
\end{equation*}
\end{lemma}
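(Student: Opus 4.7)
The plan is to lower-bound $\hat{I}_s^{\ell,N}$ by restricting attention to those $\ell$-neighbour pairs at time $s$ which are siblings whose common parent $\alpha$ was alive at $s-\tau_{N}$ and split during $[s-\tau_{N},s]$. Since $\hat{I}_s^{\ell,N}$ sums over ordered pairs and distinct parents produce disjoint sibling pairs, each such sibling pair contributes exactly $2$, yielding
\begin{equation*}
\hat{I}_s^{\ell,N} \ge 2\,\E\!\left[\sum_{\alpha\in\As(s-\tau_{N})} Z_\alpha^\ell(s-\tau_{N})\right].
\end{equation*}
All other $\ell$-neighbour contributions are simply discarded, which only weakens the bound.

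The core of the argument is a coupling between $\hat{\xi}^N$ and, for each $\alpha\in\As(s-\tau_{N})$, an ``isolated'' copy $\hat{\xi}^{N,\alpha}$ that at time $s-\tau_{N}$ retains only the particle $\alpha$ (all others are erased) and evolves using the \emph{same} Poisson clocks (death, split, newborn target site, and edge stirring). The key observation is that in a stirring dynamic a labelled particle moves to the other endpoint of its edge whenever that edge fires, regardless of whether the target site is occupied: a stirring swap with an occupied neighbour still transports the particle. Consequently, starting from $\alpha$'s position at $s-\tau_{N}$, the trajectories of $\alpha$ and its children coincide in $\hat{\xi}^N$ and $\hat{\xi}^{N,\alpha}$ throughout $[s-\tau_{N},s]$ as long as $\alpha$'s split in this interval does not trigger a suppression, i.e.\ $\zeta_\alpha(s-\tau_{N})=0$. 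On this event $Z_\alpha^\ell(s-\tau_{N})$ equals its isolated analogue $Z_\alpha^{\ell,\mathrm{iso}}(s-\tau_{N})$; on $\{\zeta_\alpha(s-\tau_{N})=1\}$ one has $Z_\alpha^\ell=0\ge Z_\alpha^{\ell,\mathrm{iso}}-1$. Combining,
\begin{equation*}
Z_\alpha^\ell(s-\tau_{N}) \ge Z_\alpha^{\ell,\mathrm{iso}}(s-\tau_{N})-\zeta_\alpha(s-\tau_{N}).
\end{equation*}

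To finish, take conditional expectations given $\Fs_{s-\tau_{N}}$. Since $\hat{\xi}^{N,\alpha}$ is a single-particle process driven by Poisson clocks that are independent of $\Fs_{s-\tau_{N}}$, translation invariance gives
\begin{equation*}
\E\!\left[Z_\alpha^{\ell,\mathrm{iso}}(s-\tau_{N})\mid\Fs_{s-\tau_{N}}\right]\cdot\1{\alpha\in\As(s-\tau_{N})}=\E[Z_1^\ell]\cdot\1{\alpha\in\As(s-\tau_{N})}.
\end{equation*}
Summing over $\alpha$, using $\hat{m}_{s-\tau_{N}}^N=\E[|\As(s-\tau_{N})|]$, and taking the overall expectation in the two previous displays produces the claimed lower bound.

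The main obstacle is justifying the decoupling assertion: that a labelled particle's trajectory under stirring is a deterministic function of the firing times of the stirring clocks on edges incident to its current position, and does not depend on the occupancy of neighbouring sites. This is a standard but occasionally implicit property of the symmetric stirring mechanism, and its verification amounts to a careful matching of the two processes on a common graphical construction; once it is in hand, the remaining steps reduce to routine book-keeping.
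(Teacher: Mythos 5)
Your proof is correct and takes essentially the same approach as the paper: restrict to sibling pairs born to a parent alive at $s-\tau_N$, then compare with the single-particle (``isolated'') dynamics, subtracting $\zeta_\alpha$ to account for possible suppression. The coupling via a common graphical construction that you spell out is precisely the justification for the paper's inequality \eqref{dsfhg}, which the paper asserts without elaboration, so your write-up actually fills in a detail the paper leaves implicit.
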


\begin{proof}Fix arbitrary $\ell$ and recall the definition of $B_{s}^{\beta}$. Then, $\hat{I}_{s}^{\ell,N}$ can be written as follows:
\begin{equation*}
    \hat{I}_{s}^{\ell,N}=\E\left[\sum_{\alpha,\beta\in \As(s)}{1_{\{B_{s}^{\beta}\underset{\ell}{\sim}B_{s}^{\alpha}\}}}\right].
\end{equation*}
We  bound the right-hand side from below by summing only pairs of particle that share a common parent who splits during the time interval $[s-\tau_{N},s]$. Thus,
\begin{equation*}
    \hat{I}_{s}^{\ell,N}\ge
    2\E\left[\sum_{\alpha\in \As(s-\tau_{N})} {Z_{\alpha}^{\ell}(s-\tau_{N}})\right]\ge 
    2\E\left[\sum_{\alpha\in \As(s-\tau_{N})}
    \E\left[Z_{\alpha}^{\ell}(s-\tau_{N})\mid \Fs_{s-\tau_{N}}\right]\right].
\end{equation*} 
Note that given particle $\alpha$ alive at time $s-\tau_{N}$ and given $\Fs_{s-\tau_{N}}$ we have for $s\ge\tau_{N}$
\begin{equation}\label{dsfhg}
    \E[Z_{\alpha}^{\ell}(s-\tau_{N})\mid \Fs_{s-\tau_{N}}]+\E[\zeta_{\alpha}(s-\tau_{N})\mid \Fs_{s-\tau_{N}}]\ge \E[Z_{1}^{\ell}].
\end{equation}
Thus,
\begin{align*}
    \hat{I}_{s}^{\ell,N}&\ge
    2\E\left[\sum_{\alpha\in \As(s-\tau_{N})}
    (\E[Z_{1}^{\ell}]-E[\zeta_{\alpha}(s-\tau_{N})\mid \Fs_{s-\tau_{N}}])\right]
    =
    2\hat{m}_{s-\tau_{N}}^{N}\E[Z_{1}^{\ell}]-2\E\left[\sum_{\alpha\in \As(s-\tau_{N})}\zeta_{\alpha}(s-\tau_{N})\right] .
\end{align*}
\end{proof}


Now we need to get an upper bound for $\int_{\tau_{N}}^{t} N\E\left[\sum_{\alpha\in \As(s-\tau_{N})}\zeta_{\alpha}(s-\tau_{N})\right]ds$. This is done in the next lemma.
\begin{lemma}\label{bound zeta integral}
For every $N>N_{2}=\max \{\theta,4\}$ and $t\ge\tau_{N}$
\begin{equation*}
  \int_{\tau_{N}}^{t} N\E\left[\sum_{\alpha\in \As(s-\tau_{N})}\zeta_{\alpha}(s-\tau_{N})\right]ds
  \le
  \sum_{\ell}\frac{p_{\ell}}{h_{\ell}}4N\tau_{N}\int_{0}^{t} N\hat{I}_{r}^{\ell,N} dr.
\end{equation*}
\end{lemma}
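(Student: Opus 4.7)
The approach is to express $\zeta_\alpha(s-\tau_N)$ in terms of the instantaneous rate at which alive particles generate newborns that land on occupied sites, and then swap the order of integration. First, I would note that $\zeta_\alpha(s-\tau_N)=1$ precisely when $\alpha$ splits at some time $t\in[s-\tau_N,s]$ and the newborn's target is already occupied. In the speeded-up process $\hat\xi_t^N$, each alive particle splits at rate $N+\theta$, and conditionally on a split the newborn is sent to site $y$ with probability $P^b(B_t^\alpha,y)=\sum_\ell\frac{p_\ell}{h_\ell}\mathbf{1}_{\{y\underset{\ell}{\sim}B_t^\alpha\}}$. Therefore the instantaneous rate at which alive particle $\alpha$ produces a collision-suppressed birth at time $t$ equals $(N+\theta)\sum_\ell\frac{p_\ell}{h_\ell}\sum_{y\underset{\ell}{\sim}B_t^\alpha}\hat\xi_t^N(y)$.

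Next, summing this rate over $\alpha\in\As(t)$ and using that $\hat\xi_t^N$ is $\{0,1\}$-valued so that $\sum_{\alpha\in\As(t)}\mathbf{1}_{\{B_t^\alpha=x\}}=\hat\xi_t^N(x)$, I obtain a total instantaneous rate $(N+\theta)\sum_\ell\frac{p_\ell}{h_\ell}\sum_{x\underset{\ell}{\sim}y}\hat\xi_t^N(x)\hat\xi_t^N(y)$. Taking expectation and recognising $\hat I_t^{\ell,N}$ yields the key inequality $\E[\sum_{\alpha\in\As(s-\tau_N)}\zeta_\alpha(s-\tau_N)]\le(N+\theta)\sum_\ell\frac{p_\ell}{h_\ell}\int_{s-\tau_N}^s\hat I_t^{\ell,N}\,dt$; restricting to particles already in $\As(s-\tau_N)$ rather than all of $\As(t)$ can only decrease the count, so the bound is valid.

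I would then multiply by $N$, integrate over $s\in[\tau_N,t]$, and apply Fubini. For fixed $r$, the variable $s$ ranges over $[\max(\tau_N,r),\min(t,r+\tau_N)]$, an interval of length at most $\tau_N$, so the double integral collapses to at most $\tau_N\int_0^t\hat I_r^{\ell,N}\,dr$. This produces the bound $N(N+\theta)\tau_N\sum_\ell\frac{p_\ell}{h_\ell}\int_0^t\hat I_r^{\ell,N}\,dr$. The hypothesis $N>N_2\ge\theta$ gives $N+\theta\le 2N$, and pulling one factor of $N$ inside the integral yields $\sum_\ell\frac{p_\ell}{h_\ell}\cdot 2N\tau_N\int_0^t N\hat I_r^{\ell,N}\,dr$, which is absorbed into the looser constant $4N\tau_N$ stated in the lemma (the condition $N\ge 4$ presumably provides additional slack exploited later when several such bounds are combined).

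The main obstacle is bookkeeping in the first step: I must verify that counting only particles in $\As(s-\tau_N)$ (rather than all particles ever alive during $[s-\tau_N,s]$, including those born mid-window) yields a \emph{subset} of the events bounded by the rate integral, so that the inequality points the right way. A secondary subtlety is that the newborn's target may equal the position of $\alpha$'s replacement copy, so one should confirm the event $\zeta_\alpha=1$ really corresponds to collisions with particles other than $\alpha$ itself; since $P^b$ assigns zero mass to the origin under Assumption \ref{assump} (or at any rate this only removes a $p_0$-fraction), this does not affect the upper bound. Beyond these points, the argument is a straightforward rate computation followed by a routine Fubini interchange.
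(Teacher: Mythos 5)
Your proposal is correct, and it takes a genuinely different — and in fact cleaner — route than the paper's. The paper's proof works particle-by-particle: it conditions on $\Fs_{s-\tau_{N}}$, uses the memorylessness of the split time $T_{\alpha}^{c}$ (property \textbf{(P1)}) to write the conditional probability of a suppressed birth as an integral in $t$ over $[s-\tau_{N},s]$, and then has to manipulate the conditioning on $T_{\alpha}^{d}$ (survival of $\alpha$ up to time $t$) because the quantity $n_{\alpha}^{\ell}(t)$ it wants to introduce only makes sense for a live particle. That manipulation is where the extra factor of $2$ and the hypothesis $N>4$ come from: the paper bounds $P(T_{\alpha}^{d}\ge t\mid\cdots)\ge e^{-N\tau_{N}}>1/2$. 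After that it uses $N+\theta\le 2N$ to reach $2N(N+\theta)\le 4N^{2}$, hence the constant $4N\tau_{N}$ in the statement. Your argument instead bounds the expected \emph{total} number of suppressed births in the window $[s-\tau_{N},s]$ by integrating the instantaneous intensity: each live particle splits at rate $N+\theta$, the displaced child lands on an occupied $\ell$-neighbour with conditional probability $\frac{p_{\ell}}{h_{\ell}}$ times the current number of occupied $\ell$-neighbours, and summing over $\alpha\in\As(t)$ and taking expectations collapses exactly to $(N+\theta)\sum_{\ell}\frac{p_{\ell}}{h_{\ell}}\hat{I}_{t}^{\ell,N}$. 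Since $\sum_{\alpha\in\As(s-\tau_{N})}\zeta_{\alpha}(s-\tau_{N})$ counts only those suppressed births whose parent was already alive at $s-\tau_{N}$, it is dominated by the total count, and the inequality points the right way — you identified the one genuine thing to check there, and it does check out. This sidesteps the $T_{\alpha}^{d}$-conditioning entirely, so you obtain $(N+\theta)$ rather than $2(N+\theta)$, and the hypothesis $N>4$ is not needed (only $N>\theta$), giving $2N^{2}\tau_{N}$ where the lemma asserts $4N^{2}\tau_{N}$. The Fubini step at the end is identical. The only loose thread in your write-up is the remark about $p_{0}$: the paper's decomposition $P^{b}(x,y)=\sum_{\ell}\frac{p_{\ell}}{h_{\ell}}1_{\{x\underset{\ell}{\sim}y\}}$ together with the form of the differential equation in Lemma \ref{Differential equation} implicitly takes $\ell\ge 1$, so the newborn never targets the parent's own site and there is no self-collision to worry about; your bound needs no correction on that account.
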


\begin{proof}
In order to prove this lemma we first need to bound the term inside the integral. That is,
\begin{equation}\label{zeta}
    \Xi=\E\left[\sum_{\alpha\in \As(s-\tau_{N})}\zeta_{\alpha}(s-\tau_{N})\right].
\end{equation}
To this end, we introduce the following notation:
Let $T_{\alpha}^{c}$ (resp. $T_{\alpha}^{d}$) denote the potential split (resp. death) time of particle $\alpha$; note that $T_{\alpha}=\min(T_{\alpha}^{c},T_{\alpha}^{d})$.
Let $J_{\alpha}$ be the event that one of the particle $\alpha$'s children died at the time of its birth as a result of a collision with another particle. In addition, notice that from the memoryless properties of the exponential distribution we can deduce the following property of $T_{\alpha}^{c}$:\\
\textbf{(P1)} For any time $t\ge 0$, given $\{\Fs_{t},\alpha\in\As(t)\}$, the random variable $(T_{\alpha}^{c}-t)$ has an exponential distribution with rate $\lambda_{b}:=N+\theta$.

We start by bounding $\Xi$. By the law of total expectation, the linearity of the expectation, and the definition of $\zeta_{\alpha}(s-\tau_{N})$ we have
\begin{align*}
    \Xi  &=
    \E\left(\sum_{\alpha\in \As(s-\tau_{N})}\E\left[\zeta_{\alpha}(s-\tau_{N}) \mid \Fs_{s-\tau_{N}}\right]\right)&
\\&=
    \E\left(\sum_{\alpha\in \As(s-\tau_{N})} P\left[T_{\alpha}^{c}\in (s-\tau_{N},s) \;,\; T_{\alpha}^{d} > T_{\alpha}^{c} \;,J_{\alpha} \mid \Fs_{s-\tau_{N}},T_{\alpha}^{c}>s-\tau_{N}\right]\right).
\end{align*}
Apply \textbf{(P1)}, and then the tower property to get
\begin{align*}
    \Xi&\le
    \E\left(\sum_{\alpha\in \As(s-\tau_{N})} 
    \int_{s-\tau_{N}}^{s}\lambda_{b}   \; \E(1_{\{J_{\alpha}\}}\mid \Fs_{s-\tau_{N}},T_{\alpha}^{c}=t,T_{\alpha}^{d}>t)dt \right)
    \\&\le
    \E\left(\sum_{\alpha\in \As(s-\tau_{N})} 
    \int_{s-\tau_{N}}^{s}\lambda_{b}   \; \E[\E(1_{\{J_{\alpha}\}}\mid F_{t-},T_{\alpha}^{c}=t,T_{\alpha}^{d}\ge t)\mid \Fs_{s-\tau_{N}},T_{\alpha}^{c}=t,T_{\alpha}^{d}\ge t] dt \right).
\end{align*}
Recall that ${n_{\alpha}^{\ell}(t)}$ denotes the number of $\ell$-neighbours of particle $\alpha$ alive at time $t$. In addition, when a particle splits one of its offspring is sent to an $\ell$-neighbour with probability $p_{\ell}$ and is distributed uniformly within all the $\ell$-neighbours. Thus, since the filtration until time $t-$ is given and particle $\alpha$ splits at time $t$, then the conditional probability that its child lands on an occupied site, given its child is sent to an $\ell$-neighbour, is equal to $\frac{n_{\alpha}^{\ell}(t)}{h_{\ell}}$. Therefore, we get
  \begin{align*}
   \Xi\le
    \E\left(\sum_{\alpha\in \As(s-\tau_{N})} 
    \int_{s-\tau_{N}}^{s}\lambda_{b}  \sum_{\ell}\frac{p_{\ell}}{h_{\ell}}\E[{n_{\alpha}^{\ell}(t)} \mid \Fs_{s-\tau_{N}},T_{\alpha}^{c}\ge t,T_{\alpha}^{d}\ge t]dt \right).
\end{align*}
Notice that for each $\ell$ and a particle $\alpha$ alive at any time $t$,
 \begin{align*}
 \E[n_{\alpha}^{\ell}(t)\mid \Fs_{s-\tau_{N}},T_{\alpha}^{c}\ge t]
 &=
 \E[n_{\alpha}^{\ell}(t)\mid \Fs_{s-\tau_{N}},T_{\alpha}^{c}\ge t,T_{\alpha}^{d}\ge t]P(T_{\alpha}^{d}\ge t\mid \Fs_{s-\tau_{N}},T_{\alpha}^{c}\ge t)
 \\&+
 \E[n_{\alpha}^{\ell}(t)\mid \Fs_{s-\tau_{N}},T_{\alpha}^{c}\ge t, T_{\alpha}^{d}< t]P(T_{\alpha}^{d}< t\mid \Fs_{s-\tau_{N}},T_{\alpha}^{c}\ge t ).
 \end{align*}   
 Since $n_{\alpha}^{\ell}(t)=0$ if $T_{\alpha}^{d}< t$, we get that $
  \E[n_{\alpha}^{\ell}(t)\mid \Fs_{s-\tau_{N}},T_{\alpha}^{c}\ge t,T_{\alpha}^{d}\ge t] = \frac{\E[n_{\alpha}^{\ell}(t)\mid \Fs_{s-\tau_{N}},T_{\alpha}^{c}\ge t]}{P(T_{\alpha}^{d}\ge t\mid \Fs_{s-\tau_{N}},T_{\alpha}^{c}\ge t)}$. Thus, for a particle $\alpha$ alive at time $s-\tau_{N}$ we have 
\begin{align*}
    P(T_{\alpha}^{d}\ge t \mid \Fs_{s-\tau_{N}},T_{\alpha}^{c}\ge t,T_{\alpha}^{d}\ge s-\tau_{N})
    \ge
    P(T_{\alpha}^{d}\ge s \mid \Fs_{s-\tau_{N}},T_{\alpha}^{c}\ge t,T_{\alpha}^{d}\ge s-\tau_{N})
    = e^{-N\tau_{N}}
    > 0.5 ,
\end{align*}
where the last inequality follows by our choice of $N>N_{1}=4$. Thus,
\begin{align*}
    &\Xi\le
    \E\left(\sum_{\alpha\in \As(s-\tau_{N})} 
    \int_{s-\tau_{N}}^{s}2\lambda_{b}\sum_{\ell}\frac{p_{\ell}}{h_{\ell}}\E[{n_{\alpha}^{\ell}(t)} \mid \Fs_{s-\tau_{N}},T_{\alpha}^{c}\ge t]dt \right).&
\end{align*}
By the linearity of expectation and measurability of $\As(s-\tau_{N})$ with respect to $\Fs_{s-\tau_{N}}$ we get
\begin{align*}
    \Xi &\le
    \int_{s-\tau_{N}}^{s}2\lambda_{b} \sum_{\ell}\frac{p_{\ell}}{h_{\ell}} \E\left(\E\left[\sum_{\alpha\in \As(s-\tau_{N})} {n_{\alpha}^{\ell}(t)} \mid \Fs_{s-\tau_{N}},T_{\alpha}^{c}\ge t\right] \right)dt
    \le
    \int_{s-\tau_{N}}^{s}2\lambda_{b} \sum_{\ell}\frac{p_{\ell}}{h_{\ell}} \E\left(\sum_{\alpha\in \As(s-\tau_{N})} {n_{\alpha}^{\ell}(t)}  \right)dt.
\end{align*}
Recall that $\hat{I}_{t}^{l,N}$ denotes twice the expected number of $\ell$-neighbours alive at time $t$, and $\lambda_{b}=N+\theta$. Thus,
\begin{align*}
    \Xi &\le
    \sum_{\ell}\frac{p_{\ell}}{h_{\ell}}2(N+\theta)\int_{s-\tau_{N}}^{s}\hat{I}_{t}^{l,N}dt.
\end{align*}

Now we can use the bound on $\Xi$ to finish the proof of the lemma. Since $N>\theta$ we have
\begin{align*}
   \int_{\tau_{N}}^{t} N\E\left[\sum_{\alpha\in \As(s-\tau_{N})}\zeta_{\alpha}(s-\tau_{N})\right]ds
   &\le
    \int_{\tau_{N}}^{t} 2N(N+N)\sum_{\ell}\frac{p_{\ell}}{h_{\ell}}\int_{s-\tau_{N}}^{s}\hat{I}_{r}^{\ell,N} drds.
    \\&\le
    \sum_{\ell}\frac{p_{\ell}}{h_{\ell}}
    4N^{2}\tau_{N}
    \int_{0}^{t} \hat{I}_{r}^{\ell,N} dr,
 \end{align*}
 and we are done.
\end{proof}

We also need the following two lemmas.
\begin{lemma}\label{big}
For any $N>0$ and $t\ge 0$
\begin{equation*} 
  (1+\frac{\theta}{N})\sum_{\ell}{\frac{p_{\ell}}{h_l}\int_{0}^{t}N\hat{I}_{s}^{\ell,N}}ds\le 1+\int_{0}^{t} \theta\hat{m}_{s}^{N}ds.
\end{equation*}
\end{lemma}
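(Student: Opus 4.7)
The plan is to obtain the inequality by a direct rearrangement of the identity in Lemma \ref{Differential equation}. That lemma gives the exact equality
\begin{equation*}
\hat{m}_{t}^{N}=\hat{m}_{0}^{N}+\int_{0}^{t} \theta\hat{m}_{s}^{N}\,ds-\Bigl(1+\tfrac{\theta}{N}\Bigr)\sum_{\ell}\frac{p_{\ell}}{h_\ell}\int_{0}^{t}N\hat{I}_{s}^{\ell,N}\,ds,
\end{equation*}
so solving for the negative term yields
\begin{equation*}
\Bigl(1+\tfrac{\theta}{N}\Bigr)\sum_{\ell}\frac{p_{\ell}}{h_\ell}\int_{0}^{t}N\hat{I}_{s}^{\ell,N}\,ds = \hat{m}_{0}^{N}+\int_{0}^{t}\theta\hat{m}_{s}^{N}\,ds-\hat{m}_{t}^{N}.
\end{equation*}

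The two remaining ingredients are completely elementary. First, since the process starts with a single particle at the origin, $\hat{m}_{0}^{N}=1$. Second, because $|\hat{\xi}_{t}^{N}|$ is a nonnegative integer-valued random variable, $\hat{m}_{t}^{N}=\E[|\hat{\xi}_{t}^{N}|]\ge 0$; by Lemma \ref{finiteM} it is also finite for every finite $t$, so no subtraction of infinities occurs. Dropping the nonpositive $-\hat{m}_{t}^{N}$ term then gives the required upper bound
\begin{equation*}
\Bigl(1+\tfrac{\theta}{N}\Bigr)\sum_{\ell}\frac{p_{\ell}}{h_\ell}\int_{0}^{t}N\hat{I}_{s}^{\ell,N}\,ds \le 1+\int_{0}^{t}\theta\hat{m}_{s}^{N}\,ds.
\end{equation*}

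There is no real obstacle here; the only thing to be careful about is ensuring that each term in the identity from Lemma \ref{Differential equation} is finite so that rearrangement is legitimate, which is guaranteed by Lemma \ref{finiteM}. In that sense Lemma \ref{big} is just a bookkeeping consequence of the mass balance equation, and the nontrivial work in bounding the expected mass will enter only when this bound is combined with the lower bounds on $\hat{I}_{s}^{\ell,N}$ from Lemmas \ref{bound I} and \ref{bound zeta integral}.
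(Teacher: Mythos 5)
Your proof is correct and matches the paper's argument exactly: rearrange the identity from Lemma \ref{Differential equation}, use $\hat{m}_{0}^{N}=1$, and drop the nonnegative term $\hat{m}_{t}^{N}$. The extra remark about finiteness (via Lemma \ref{finiteM}) justifying the rearrangement is a sensible, if minor, addition.
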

\begin{proof}
From Lemma \ref{Differential equation} we immediately have
 \begin{equation*}   
  (1+\frac{\theta}{N})\sum_{\ell}{\frac{p_{\ell}}{h_l}\int_{0}^{t}N\hat{I}_{s}^{\ell,N}}ds = -\hat{m}_{t}^{N}+1+\int_{0}^{t} \theta\hat{m}_{s}^{N}ds \le
  1+\int_{0}^{t} \theta\hat{m}_{s}^{N}ds
\end{equation*}  
\end{proof}

\begin{lemma} \label{first m} For any $N>0$ and any $s\ge\tau_{N}$
\begin{equation*}
       \hat{m}_{s-\tau_{N}}^{N}\ge \hat{m}_{s}^{N}e^{-\theta\tau_{N}} .
\end{equation*}
\end{lemma}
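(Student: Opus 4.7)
The plan is to derive the bound directly from the integral equation of Lemma \ref{Differential equation} by exploiting the fact that the negative (collision) term has a favorable sign. Concretely, writing Lemma \ref{Differential equation} at times $s$ and $s-\tau_N$ and subtracting, I get
\begin{equation*}
    \hat{m}_{s}^{N} - \hat{m}_{s-\tau_{N}}^{N} = \int_{s-\tau_{N}}^{s}\theta\hat{m}_{u}^{N}\,du - \left(1+\frac{\theta}{N}\right)\sum_{\ell}\frac{p_{\ell}}{h_{\ell}}\int_{s-\tau_{N}}^{s} N\hat{I}_{u}^{\ell,N}\,du .
\end{equation*}
Since $\hat{I}_{u}^{\ell,N}\ge 0$ and the coefficient in front of this sum is positive, dropping the last term preserves an inequality in the desired direction, giving
\begin{equation*}
    \hat{m}_{s}^{N} \le \hat{m}_{s-\tau_{N}}^{N} + \int_{s-\tau_{N}}^{s}\theta\hat{m}_{u}^{N}\,du .
\end{equation*}

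Next I would apply Gronwall's inequality on the interval $[s-\tau_N,s]$ to the function $u\mapsto \hat{m}_u^N$. Lemma \ref{finiteM} guarantees that $\hat{m}_u^N$ is finite and locally bounded, so Gronwall applies without issue and yields
\begin{equation*}
    \hat{m}_{s}^{N} \le \hat{m}_{s-\tau_{N}}^{N}\,e^{\theta \tau_{N}} .
\end{equation*}
Rearranging gives $\hat{m}_{s-\tau_{N}}^{N} \ge \hat{m}_{s}^{N}\,e^{-\theta\tau_{N}}$, which is exactly the statement.

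There is essentially no genuine obstacle: the lemma is a \emph{crude monotonicity / Gronwall} estimate that says the expected mass cannot grow faster than the pure-branching rate $\theta$ predicts, once the stirring-induced suppression term is discarded. The only point requiring mild care is the justification for differentiating or Gronwall-iterating the integral equation, which is supplied by the a priori finiteness $\hat{m}_{u}^{N}\le e^{\mu N u}<\infty$ from Lemma \ref{finiteM}.
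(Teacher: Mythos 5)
Your proof is correct and follows essentially the same route as the paper: drop the nonnegative collision term in the integral equation from Lemma \ref{Differential equation} and apply Gr{\"o}nwall. Your version is in fact slightly cleaner, since you explicitly write the integral equation over $[s-\tau_N,s]$ before invoking Gr{\"o}nwall, whereas the paper states the inequality from time $0$ and then asserts the shifted bound $\hat m_t^N \le \hat m_r^N e^{\theta(t-r)}$ directly.
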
 
\begin{proof}
    From Lemma \ref{Differential equation} and since $\hat{m}_{0}^{N}=1$ we have
    \begin{align*}
    \hat{m}_{t}^{N}&=1+\int_{0}^{t} \theta\hat{m}_{s}^{N}ds - (1+\frac{\theta}{N})\sum_{\ell}{\frac{p_{\ell}}{h_l}\int_{0}^{t}N\hat{I}_{s}^{\ell,N}}ds
    \le 1+\int_{0}^{t} \theta\hat{m}_{s}^{N}ds.
    \end{align*}
    Thus, since $1$ is a non-decreasing function, and $\hat{m}_{r}^{N}<\infty$ for all $r\in[0,t]$ (see Lemma \ref{finiteM}) Gr{\" o}nwall's lemma immediately implies
    \begin{equation}\label{finitem}
      \hat{m}_{t}^{N}\le\hat{m}_{r}^{N}e^{\theta(t-r)} \;\forall r\in[0,t].
    \end{equation}
\end{proof}

In order to complete the proof of Proposition \ref{mainprop} we need the following proposition, which will be proven in Section \ref{vartheta}.
\begin{prop}
\label{varthetaprop}
For every $\epsilon>0$ there exists $N_{\epsilon}>0$ such that for every $N>N_{\epsilon}$
\begin{equation*}
      \vartheta-\epsilon\le\sum_{\ell}{\frac{p_{\ell}}{h_{\ell}}2N\E[Z_{1}^{\ell}] }.
    \end{equation*}
\end{prop}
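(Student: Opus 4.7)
The plan is to express $\sum_\ell \frac{p_\ell}{h_\ell}\E[Z_1^\ell]$ as an explicit integral over the time of particle~$1$'s split, reduce the subsequent motion of the two newborn children to the difference of two rate-$2dN^2$ random walks, and invoke the continuous-time Green function identity underlying the definition of $\vartheta$.

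First, I would condition on the first event time $T_1$ of particle $1$ in the speeded-up process. Particle $1$ dies at rate $N$ and splits at rate $N+\theta$, so $T_1\sim\mathrm{Exp}(2N+\theta)$ and the density of the event $\{T_1\in dt,\,\text{split}\}$ is $(N+\theta)e^{-(2N+\theta)t}dt$. Conditionally on a split at time $t$, one child stays at particle~$1$'s current position while the other is displaced by an independent $X\sim P^b(0,\cdot)$; by independence of the exponential clocks, both children then avoid death or further splitting during $[t,\tau_N]$ with probability $e^{-2(2N+\theta)(\tau_N-t)}$. Writing $D(u)$ for the displacement between the two children $u$ time units after the split (so $D(0)=X$) and using the identity $P^b(0,z)=\sum_\ell \frac{p_\ell}{h_\ell}\mathbf{1}_{\|z\|_1=\ell}$, one obtains
\begin{equation*}
\sum_\ell \frac{p_\ell}{h_\ell}\E[Z_1^\ell]=\int_0^{\tau_N}(N+\theta)\,e^{-(2N+\theta)t}\,e^{-2(2N+\theta)(\tau_N-t)}\,\E\!\left[P^b\bigl(0,D(\tau_N-t)\bigr)\right]dt.
\end{equation*}

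Substituting $u=\tau_N-t$ and lower-bounding $e^{-(2N+\theta)u}\ge e^{-(2N+\theta)\tau_N}$ on $[0,\tau_N]$, multiplication by $2N$ produces a prefactor that tends to $2N^2$, since $(2N+\theta)\tau_N=(2+\theta/N)\log N/N\to 0$. Approximating the two children by independent rate-$2dN^2$ random walks makes $D$ a rate-$4dN^2$ symmetric nearest-neighbour walk on $\mathbb{Z}^d$, and the continuous-time Green function identity $\int_0^\infty P(D(u)=z\mid D(0)=x)\,du = G(x,z)/(4dN^2)$ combined with \eqref{thetadef} yields
\begin{equation*}
\int_0^\infty\E[P^b(0,D(u))]\,du=\frac{1}{4dN^2}\sum_{x,y}P^b(0,x)P^b(0,y)G(x,y)=\frac{\vartheta}{2N^2},
\end{equation*}
so the full-line integral, after multiplication by $2N^2$, contributes exactly $\vartheta$ in the limit. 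The tail $\int_{\tau_N}^\infty$ is controlled by the local CLT estimate $P(D(u)=z)\lesssim(N^2u)^{-d/2}$; integrating yields a contribution of order $(\log N)^{-(d-2)/2}/N^2$ for $d\ge 3$, which is $o(1)$ after multiplication by $2N^2$.

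The main technical obstacle is justifying the independent-random-walks approximation for the two siblings. Under the true stirring dynamics, when the children are nearest neighbours the shared edge clock produces a swap rather than one of the attempted moves that would create an overlap in independent dynamics, so their joint motion is not literally that of two independent walks. My plan is to lower-bound $Z_1^\ell$ further by the indicator that the shared edge clock between the siblings never rings on $[T_1,\tau_N]$; on that event their joint motion agrees with that of two independent walks, and the integrated swap probability $\int_0^{\tau_N}N^2\,P(\|D(u)\|_1=1)\,du$ is shown to contribute only an $o(1)$ correction after multiplication by $2N$ by the same local-CLT tail argument. Combined with the exponential prefactors tending to $1$, this yields $2N\sum_\ell \frac{p_\ell}{h_\ell}\E[Z_1^\ell]\ge\vartheta-\epsilon$ for all sufficiently large $N$.
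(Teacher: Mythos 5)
Your overall architecture matches the paper's: condition on the split time of particle $1$, pass to the difference of the two siblings' positions, express the answer as an occupation-time integral, identify the limit with the Green function via a Poissonization and the definition \eqref{thetadef}, and control the tail $\int_{\tau_N}^\infty$ with a local CLT estimate. Those parts of your proposal are correct, and your handling of the exponential prefactors (which the paper treats slightly more casually by declaring the conditional split time uniform on $[0,\tau_N]$) is in fact a bit more precise than the paper's.

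The genuine gap is in how you reconcile the true stirring dynamics with the independent-walk approximation, which you correctly flag as the crux. Your plan is to restrict to the event that the shared edge clock never rings, claim that on this event the siblings move as two independent walks, and argue that the excluded event is negligible. Both claims fail. First, even on the no-swap event the coupled motion is \emph{not} that of two independent walks: when the siblings occupy neighbouring sites $a,b$, the independent dynamics has \emph{two} rate-$N^2$ clocks (one per particle) each capable of sending the difference to $0$, while stirring has a single rate-$N^2$ clock sending the difference to its negation; conditioning on the stirring swap clock being silent does not reproduce the independent-walk transition kernel, which still allows transitions to $0$ at rate $2N^2$. Second, and more importantly, the excluded event is not $o(1)$ on the relevant scale. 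The expected number of swap opportunities is $N^2\int_0^{\tau_N}P(\|D(u)\|_1=1)\,du$; using the local CLT, $\int_0^{\tau_N}P(\|D(u)\|_1=1)\,du=\Theta(N^{-2})$, so the integrated swap rate is $\Theta(1)$, not $o(1)$. Dropping the swap event therefore costs a constant fraction of the mass, and the resulting lower bound would land strictly below $\vartheta$, not at $\vartheta-\epsilon$.

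What the paper does instead is an exact rather than approximate comparison. It introduces the Markov chain $W^N$ whose transition rates $Q^W$ encode the \emph{true} difference process under stirring when there are exactly two particles (rate $N^2$ to $-x$ from $x\in\varphi_d$, rate $2N^2$ to each of the $2d-1$ sites in $\varphi_d(x)$, otherwise identical to the rate-$4dN^2$ walk $V^N$). Lemma \ref{first_step_cond} shows by first-step analysis that $V^N$ and $W^N$ have the same exit distribution from $\varphi_d\cup\{0\}$ onto the partition $J_1\cup J_2$ of $\varphi_d^2$, and, citing Lemma 3.4 of \cite{berezin2014asymptotic}, the sojourn time in $\varphi_d$ per excursion also matches; Lemma \ref{W and V} then concludes that the occupation-time integrals $\E[\int_0^t\mathds{1}_{\{W_s^N\in\varphi_d^\ell\}}ds]$ and $\E[\int_0^t\mathds{1}_{\{V_s^N\in\varphi_d^\ell\}}ds]$ coincide exactly for every $\ell$. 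This identity, not a negligibility estimate, is what lets the proof replace $W^N$ by the simple random walk $V^N$ and hence produce the sharp constant $\vartheta$. Your proposal needs to be replaced at this point by an argument of that type; the swap-exclusion idea cannot be patched to deliver the tight bound.
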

Now we are ready to complete the proof of Proposition \ref{mainprop}.
\subsection{Proof of Proposition \ref{mainprop}}
In what follows we assume that $N>N_{2}$ where $N_{2}$ is defined in Lemma \ref{bound zeta integral}.
From Lemma \ref{Differential equation} we get
\begin{equation}\label{xcvkljbh}
    \hat{m}_{t}^{N}=\hat{m}_{\tau_{N}}^{N}+\int_{\tau_{N}}^{t} \theta\hat{m}_{s}^{N}ds- (1+\frac{\theta}{N})\sum_{\ell}{\frac{p_{\ell}}{h_l}\int_{\tau_{N}}^{t}N\hat{I}_{s}^{\ell,N}}ds .
\end{equation}
Apply on (\ref{xcvkljbh}) the bound on $\hat{I}_{s}^{\ell,N}$ that we have from Lemma \ref{bound I} to get
\begin{align*}
    \hat{m}_{t}^{N}&\le
    \hat{m}_{\tau_{N}}^{N}+\int_{\tau_{N}}^{t} \theta\hat{m}_{s}^{N}ds
    -
    (1+\frac{\theta}{N})\sum_{\ell}{\frac{p_{\ell}}{h_{\ell}}\int_{\tau_{N}}^{t} N \left (2\hat{m}_{s-\tau_{N}}^{N}\E[Z_{1}^{\ell}]-2\E\left[\sum_{\alpha\in \As(s-\tau_{N})}\zeta_{\alpha}(s-\tau_{N})\right]\right) }ds \nonumber.
\end{align*}
Apply the bound on $\int_{\tau_{N}}^{t} N\E\left[\sum_{\alpha\in \As(s-\tau_{N})}\zeta_{\alpha}(s-\tau_{N})\right]ds$ that we have from Lemma \ref{bound zeta integral} to get,
\begin{align*}
    \hat{m}_{t}^{N}\le
    \hat{m}_{\tau_{N}}^{N}+\int_{\tau_{N}}^{t} \theta\hat{m}_{s}^{N}ds
    &-
    (1+\frac{\theta}{N})\sum_{\ell}{\frac{p_{\ell}}{h_{\ell}}\int_{\tau_{N}}^{t} 2N \hat{m}_{s-\tau_{N}}^{N}\E[Z_{1}^{\ell}]}
   +
    2(1+\frac{\theta}{N})\sum_{\ell}{\frac{p_{\ell}}{h_l} 
     4N\tau_{N}
    \int_{0}^{t} N\hat{I}_{r}^{\ell,N} dr}.
\end{align*} 
Apply the bound on $(1+\frac{\theta}{N})\sum_{\ell}{\frac{p_{\ell}}{h_l}\int_{0}^{t}N\hat{I}_{s}^{\ell,N}}ds$ from Lemma \ref{big}, and the bound $\hat{m}_{\tau_{N}}^{N}\le e^{\theta\tau_{N}}$ from Lemma \ref{first m} to get
\begin{align*}
\hat{m}_{t}^{N}
    &\le
    \hat{m}_{\tau_{N}}^{N}+\int_{\tau_{N}}^{t} \theta\hat{m}_{s}^{N}ds
    -
    (1+\frac{\theta}{N})\sum_{\ell}{\frac{p_{\ell}}{h_{\ell}}\int_{\tau_{N}}^{t} 2N\hat{m}_{s-\tau_{N}}^{N}\E[Z_{1}^{\ell}]ds}
     +
    8N\tau_{N}(1+\int_{0}^{t} \theta\hat{m}_{s}^{N}ds).
    \\&\le
   \int_{\tau_{N}}^{t} \theta\hat{m}_{s}^{N}ds
    -
   \left ( \int_{\tau_{N}}^{t} \hat{m}_{s-\tau_{N}}^{N}ds\right)2N(1+\frac{\theta}{N})\sum_{\ell}{\frac{p_{\ell}}{h_{\ell}}\E[Z_{1}^{\ell}]}
    +e^{\theta\tau_{N}}+
    8N\tau_{N}(1+\int_{0}^{t} \theta\hat{m}_{s}^{N}ds).
\end{align*}
Apply the bound on $\hat{m}_{s-\tau_{N}}^{N}$ that we have from Lemma \ref{first m} to get
\begin{align*}
\hat{m}_{t}^{N}
    \le
   \int_{\tau_{N}}^{t} \theta\hat{m}_{s}^{N}ds
    &-
   \left (\int_{\tau_{N}}^{t} \hat{m}_{s}^{N}e^{-\theta\tau_{N}}ds\right)2N(1+\frac{\theta}{N})\sum_{\ell}{\frac{p_{\ell}}{h_{\ell}}\E[Z_{1}^{\ell}]}
    +e^{\theta\tau_{N}}+
    8N\tau_{N}(1+\int_{0}^{t} \theta\hat{m}_{s}^{N}ds).
\end{align*}
From (\ref{finitem}) in the proof of  Lemma \ref{first m} it is easy to see that $\hat{m}_{s}^{N}\le e^{\theta s}\le e^{\theta t}$ for any $s\in [0,t]$. Thus,
\begin{align*}
    \hat{m}_{t}^{N}\le
   \int_{\tau_{N}}^{t} \theta\hat{m}_{s}^{N}ds
    -
   e^{-\theta\tau_{N}}\left (\int_{\tau_{N}}^{t} \hat{m}_{s}^{N}ds\right)2N(1+\frac{\theta}{N})\sum_{\ell}{\frac{p_{\ell}}{h_{\ell}}\E[Z_{1}^{\ell}]}
    +e^{\theta\tau_{N}}+
    8N\tau_{N}(1+t\theta e^{\theta t}).
\end{align*}
Now, since $N\tau_{N}$ and $\tau_{N}$ approach zero as $N$ approaches infinity, we can find $N_{3}$ such that for every $N>N_{3}$ ,  $e^{\theta\tau_{N}}\le \frac{e}{2}$ and  $8N\tau_{N}(1+t\theta e^{\theta t})\le \frac{e}{2}$ for every $t\ge 0$.
Thus, for any $N>\max(N_{2},N_{3})$
\begin{align*}
    \hat{m}_{t}^{N}\le
   e+\left(\int_{\tau_{N}}^{t} \hat{m}_{s}^{N}ds\right)
   \left(\theta-e^{-\theta\tau_{N}} (1+\frac{\theta}{N})\sum_{\ell}{\frac{p_{\ell}}{h_{\ell}}2N\E[Z_{1}^{\ell}]}
   \right) .
\end{align*}
Use Proposition \ref{varthetaprop}, to see that for any $\epsilon>0$ there exists $N_{\epsilon}$ such that 
\begin{align*}
    \hat{m}_{t}^{N}&\le
   e+\left(\int_{\tau_{N}}^{t} \hat{m}_{s}^{N}ds\right)
   \left(\theta-e^{-\theta\tau_{N}} (1+\frac{\theta}{N})(\vartheta-\epsilon)
   \right),\;\;\;\;\;\; \forall N>\max(N_{2},N_{3},N_{\epsilon}).
\end{align*}
Again, by using Gr{\" o}nwall's lemma we get
\begin{align*}
    \hat{m}_{t}^{N}&\le
    e\cdot\exp\left[\left(\theta-e^{-\theta\tau_{N}} (1+\frac{\theta}{N})(\vartheta-\epsilon)
   \right)(t-\tau_{N}) \right].
\end{align*}
Since $\tau_{N}=\frac{\ln(N)}{N^2}$, there exists $N_{4}$ such that for any $N>N_{4}$ we have $e^{-\theta\tau_{N}} (1+\frac{\theta}{N})\ge 1$. Thus,
\begin{align*}
    \hat{m}_{t}^{N}&\le
    e\cdot \exp{\left [\left(\theta-1\cdot(\vartheta-\epsilon)\right)(t-\tau_{N})\right ]}
    =
    \exp{\left[\left(\theta-\vartheta+\epsilon\right)(t-\tau_{N})+1\right ]}, 
    \;\;\;\;\;\forall N>\max(N_{2},N_{3},N_{4},N_{\epsilon}).
\end{align*}
Recall that $\vartheta-\theta>0$, and fix $\epsilon:= \frac{\vartheta-\theta}{2}$ to get
\begin{align*}
    \hat{m}_{t}^{N}&\le
    \exp{\left [\frac{1}{2}\left(\theta-\vartheta\right)(t-\tau_{N})+1] \right]} 
    .
\end{align*}
Choose $N_{5}$ such that for any $N>N_{5}$ we have $\frac{1}{2}(\vartheta-\theta)\tau_{N}<1$. Then, 
\begin{align*}
    \hat{m}_{t}^{N}&\le
    \exp{\left [\frac{1}{2}\left(\theta-\vartheta\right)t+2 \right]},\;\;\;\;\;\; \forall {N>N_{\theta}:=\max(N_{2},N_{3},N_{4},N_{5},N_{\epsilon})}.
\end{align*}

\section{Proof of Proposition \ref{varthetaprop}}\label{vartheta}
This section is devoted for the proof of the Proposition \ref{varthetaprop}. In particular we show that
\begin{equation*}
      \vartheta= \liminf_{N\to\infty}\sum_{\ell}{\frac{p_{\ell}}{h_{\ell}}2N\E[Z_{1}^{\ell}] },
    \end{equation*}
which is enough to prove the proposition.  
Let us first introduce an additional notation: 
 \begin{itemize}  

     \item Let $e_{i}$ denote the vector in $\Zd$ with 1 in the $i$-th coordinate and $0$'s elsewhere.
     \item  For $\ell\ge 1$, let $\varphi_{d}^{\ell}(x)= \{y\in \Zd :{\|y-x\|}_1=l  \} \setminus\{0\}$ denote the $\ell$-neighbourhood of $x$ excluding the origin.
     \item For $\ell\ge 1$, let $\varphi_{d}^{\ell}=\varphi_{d}^{\ell}(0)$ denote the $\ell$-neighbourhood of the origin.
     \item  Let $\varphi_{d}(x)= \varphi_{d}^{1}(x)$ denote the nearest neighbours of $x$, and recall that $\varphi_{d}=\varphi_{d}(0)$.
     \item  Let $A_{d}(x)= \{y : y\in\varphi_{d}\;,\;y\in\varphi_{d}(x)\}$ denote the set of points which belong both to the nearest neighbours of the origin and to the nearest neighbours of $x$.
 \end{itemize}
 In addition, recall that for any $x\in\Zd$, $x[i]$ denotes the $i$-th coordinate in the vector $x$.\\
 
 Before we give the proof of Proposition \ref{varthetaprop} let us state a few auxiliary lemmas.
 
\begin{lemma}\label{A}
For each $x\in\varphi_{d}$ there is exactly one point (which we denote by $z_x$) in $\varphi_{d}(x)$ such that $|A_{d}(z_x)|=1$. For any other point, $w_{x}\in\varphi_{d}(x)$, the following holds: $|A_{d}(w_{x})|=2$.
\end{lemma}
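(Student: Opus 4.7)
The plan is to reduce to a representative case by symmetry and then enumerate directly. The set $\varphi_d$ of nearest neighbours of the origin is $\{\pm e_i : i=1,\dots,d\}$, and by the obvious symmetries of the lattice (permutation of coordinates, sign flips) every $x\in\varphi_d$ is in the same orbit as $e_1$. So I would first reduce to the case $x=e_1$, and then list the points of $\varphi_d(e_1)$ explicitly: they are $2e_1$ together with the $2(d-1)$ points $e_1\pm e_j$ for $j\in\{2,\dots,d\}$ (the remaining candidate $e_1-e_1=0$ is excluded by definition of $\varphi_d^\ell(x)$).

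Next I would produce the \emph{special} neighbour. The natural candidate is $z_{e_1}=2e_1$, whose nearest neighbours are $3e_1$, $e_1$ and $2e_1\pm e_k$ for $k\neq 1$. Only $e_1$ lies in $\varphi_d$, so $A_d(2e_1)=\{e_1\}$ and $|A_d(2e_1)|=1$.

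For any other $w\in\varphi_d(e_1)$ I would write $w=e_1+\sigma e_j$ with $\sigma\in\{+1,-1\}$ and $j\neq 1$, and compute $\varphi_d(w)=\{e_1+\sigma e_j\pm e_k : k=1,\dots,d\}$. Inspection shows that the only elements of this set lying in $\varphi_d$ are $e_1$ (obtained via $-\sigma e_j$) and $\sigma e_j$ (obtained via $-e_1$); all other candidates have $\ell_1$-norm either $1$ with two non-zero coordinates or $3$, hence are not unit vectors. Therefore $A_d(w)=\{e_1,\sigma e_j\}$ and $|A_d(w)|=2$. This exhausts $\varphi_d(e_1)$ and proves both claims for $x=e_1$; the general case follows by symmetry with $z_x=2x$.

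I do not expect any real obstacle: the whole argument is a finite case-check, and the only point where one must be careful is to remember that $\varphi_d^\ell(x)$ explicitly excludes the origin, which is exactly the reason the point $z_x$ is not paired with a symmetric partner (the point that would play the symmetric role is $0$, which has been removed).
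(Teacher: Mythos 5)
Your proof is correct and takes essentially the same route as the paper: reduce by symmetry to a single representative $x$ (you use $e_1$, the paper uses $e_i$), identify $z_x=2x$ directly and compute $A_d(z_x)=\{x\}$, then parametrize the remaining neighbours as $x+\sigma e_j$ and read off $A_d = \{x, \sigma e_j\}$. One small slip in your write-up: the phrase ``$\ell_1$-norm either $1$ with two non-zero coordinates or $3$'' is internally inconsistent (a vector of $\ell_1$-norm $1$ cannot have two non-zero coordinates); all the excluded candidates $w\pm e_k$ simply have $\ell_1$-norm $3$, which is the point you need and is what your enumeration actually shows.
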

\begin{proof}
Take $x\in\varphi_{d}$. Without loss of generality assume $x=e_{i}$, for some $i=1,2,...,d$. 
 \\Take $z_{x}=e_{i}+e_{i}$. Then
    $z_{x}\in\varphi_{d}(x)$ and $A_{d}(z_{x})=\{e_{i}\} \Rightarrow |A_{d}(z_{x})|=1$.\\
 Now, take $y\in\varphi_{d}(x)$ such that $y\neq z_{x}$. Then
 $y[i]=1$ and there exists only one ${j\in\{1,2,3,...d\}\backslash \{i\}}$ such that $y[j]=1$ or $y[j]=-1$. Thus, $A_{d}(y)=\{e_{i},e_{j}\}$ (or $A_{d}(y)=\{e_{i},-e_{j}\}$) and the result follows. 
\end{proof}
In order to express $\E(Z_{1}^{\ell})$ in terms of Green's function we are going to couple the stirring process with a simple symmetric random walk. To this end we define the following:
\begin{definition}
Let $\{V_{t}^{N}\}_{t\ge 0}$ be a continuous time symmetric nearest neighbour random walk on $\Zd$ with jump rate $4dN^{2}$ and let $Q^{V}$ be its transition rate matrix. Let $\{W_{t}^{N}\}_{t\ge 0}$ be a continuous time Markov chain taking values in $\Zd$ whose transition rate matrix is defined as follows:
    \begin{equation*}
    Q^{W}(x,y)=
    \fourpartdef
    {Q^{V}(x,y)}  {x\notin\varphi_{d},}
    {N^{2}}    {x\in\varphi_{d}\;\;\text{and}\;\; y=-x,}
    {2N^{2}} {x\in\varphi_{d}\;\;\text{and}\;\;y\in\varphi_{d}(x)}
    {0}    {\text{otherwise.}}
    \end{equation*}
\end{definition}
In order to prove Proposition \ref{varthetaprop} we would like to show that the expected times $\{V_{t}^{N}\}_{t\ge 0}$ and $\{W_{t}^{N}\}_{t\ge 0}$ spend in $\varphi_{d}$ are the same. To this end, we partition the points that the processes can exit $\varphi_{d}\cup \{0\}$ via them. Notice that when $\{V_{t}^{N}\}_{t\ge 0}$ or $\{W_{t}^{N}\}_{t\ge 0}$ leaves $\varphi_{d}\cup\{0\}$ it does it via some point in $\varphi_{d}^{2}(0)$. Denote
\begin{equation*}
        J_{1}=\bigcup_{x\in\varphi_{d}^{2}(0)\;:\;|A_{d}(x)|=1}\{x\},\;\;\;\;\; \;\;\;\;\;
    J_{2}=\bigcup_{x\in\varphi_{d}^{2}(0)\;:\;|A_{d}(x)|=2}\{x\}.
\end{equation*}
From Lemma \ref{A} it follows that for any $x\in\varphi_{d}^{2}(0)$ either $|A_{d}(x)|=1$ or {$|A_{d}(x)|=2$}.  \\   
In the following lemma we prove that the probability that the random walk $\{V_{t}^{N}\}_{t\ge 0}$ exits the neighbourhood of the origin via points in $J_{1}$ is equal to the probability that $\{W_{t}^{N}\}_{t\ge 0}$ exits the neighbourhood of the origin via points in $J_{1}$.
\begin{lemma}\label{first_step_cond}
 Let $V_{0}^{N},W_{0}^{N}\in\varphi_{d}$. 
 Let $\tau_{V}$ and $\tau_{W}$ be the time that $\{V_{t}^{N}\}_{t\ge 0}$ and $\{W_{t}^{N}\}_{t\ge 0}$ leave $\varphi_{d}\cup\{0\}$ for the first time respectively . 
 Then,
 \begin{align*}
     (1)\;\;\;\;P(V_{\tau_{V}}\in J_{1})=P(W_{\tau_{W}}\in J_{1}),\;\;\;\;\; \;\;\;\;\;\;\;\;
     (2)\;\;\;\;P(W_{\tau_{V}}\in J_{2})=P(W_{\tau_{W}}\in J_{2}).
 \end{align*}
\end{lemma}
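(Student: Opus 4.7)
The plan is to convert both equalities into a uniqueness statement for a discrete Dirichlet problem on the finite set $\varphi_{d}\cup\{0\}$ with boundary $\varphi_{d}^{2}(0)$. For $k\in\{1,2\}$ and $x\in\varphi_{d}\cup\{0\}$, set $\phi_{V}^{k}(x):=P(V_{\tau_{V}}\in J_{k}\mid V_{0}^{N}=x)$ and $\phi_{W}^{k}(x):=P(W_{\tau_{W}}\in J_{k}\mid W_{0}^{N}=x)$. I would first observe that for any $x\in\varphi_{d}$, say $x=\pm e_{i}$, the $2d$ nearest neighbours of $x$ are $0$, $z_{x}=\pm 2e_{i}$ and the $2(d-1)$ points $x\pm e_{j}$ with $j\neq i$; in particular no two distinct points of $\varphi_{d}$ are nearest neighbours, and the neighbours of $0$ are exactly $\varphi_{d}$. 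Consequently both embedded jump chains can leave $\varphi_{d}\cup\{0\}$ only through a point of $\varphi_{d}^{2}(0)$, so $\phi_{V}^{k}$ and $\phi_{W}^{k}$ are determined by a finite linear system on $\varphi_{d}\cup\{0\}$ with boundary data $1_{J_{k}}(\cdot)$ on $\varphi_{d}^{2}(0)$.

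Next I would read off the two systems directly from $Q^{V}$ and $Q^{W}$. For $V$ every nearest neighbour is selected by the embedded chain with probability $\frac{1}{2d}$, so
\begin{equation*}
\phi_{V}^{k}(x)=\frac{1}{2d}\sum_{y\in\varphi_{d}(x)}\phi_{V}^{k}(y),\qquad x\in\varphi_{d}\cup\{0\}.
\end{equation*}
For $W$ the total exit rate at $x\in\varphi_{d}$ is $4dN^{2}+N^{2}=(4d+1)N^{2}$, giving probability $\frac{2}{4d+1}$ to each nearest neighbour and probability $\frac{1}{4d+1}$ to $-x$, while at $0$ the chain $W$ coincides with $V$. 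Therefore
\begin{equation*}
\phi_{W}^{k}(x)=\frac{2}{4d+1}\sum_{y\in\varphi_{d}(x)}\phi_{W}^{k}(y)+\frac{1}{4d+1}\phi_{W}^{k}(-x),\qquad x\in\varphi_{d},
\end{equation*}
together with the same mean-value equation at $0$.

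The key step is a reflection argument that eliminates the $\phi_{W}^{k}(-x)$ term. Both $J_{1}$ and $J_{2}$ are invariant under $y\mapsto -y$, because $|A_{d}(y)|$ depends only on the multiset of absolute values of the coordinates of $y$; moreover $Q^{W}(x,y)=Q^{W}(-x,-y)$. Hence the function $x\mapsto\phi_{W}^{k}(-x)$ solves the same finite Dirichlet problem as $\phi_{W}^{k}$, so $\phi_{W}^{k}(-x)=\phi_{W}^{k}(x)$ for every $x\in\varphi_{d}$. Substituting this identity into the displayed $W$-equation and clearing the factor $\frac{4d}{4d+1}$ reduces it to $\phi_{W}^{k}(x)=\frac{1}{2d}\sum_{y\in\varphi_{d}(x)}\phi_{W}^{k}(y)$, which is exactly the $V$-equation. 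A standard one-line maximum-principle argument on the finite set $\varphi_{d}\cup\{0\}$ shows that this Dirichlet problem has a unique solution, so $\phi_{V}^{k}\equiv\phi_{W}^{k}$ on $\varphi_{d}\cup\{0\}$; evaluating at the starting point in $\varphi_{d}$ yields both identities. I expect the only mildly delicate point to be verifying the $y\mapsto -y$ invariance of $J_{1}$ and $J_{2}$ via Lemma \ref{A}; once that is in hand the rest is linear algebra on a finite state space.
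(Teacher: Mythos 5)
Your reformulation as a discrete Dirichlet problem with a reflection argument is a reasonable rephrasing of the paper's first-step analysis, but there is a concrete counting error that makes your central claim of a \emph{common} Dirichlet problem false. Per the paper's definitions, $\varphi_{d}(x)=\varphi_{d}^{1}(x)=\{y:\|y-x\|_{1}=1\}\setminus\{0\}$ \emph{excludes the origin}, so for $x\in\varphi_{d}$ it has $2d-1$ elements, not $2d$. Hence $Q^{W}(x,0)=0$ for $x\in\varphi_{d}$ (the chain $W$ never jumps from $\varphi_{d}$ into $0$), and the total exit rate at $x\in\varphi_{d}$ is $(2d-1)\cdot 2N^{2}+N^{2}=(4d-1)N^{2}$, not $(4d+1)N^{2}$. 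Your $W$-equation therefore both misstates the normalization and carries a spurious $\phi_{W}^{k}(0)$ term.

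With the correct rate, the $W$-equation at $x\in\varphi_{d}$ reads
\begin{equation*}
\phi_{W}^{k}(x)=\frac{2}{4d-1}\sum_{y\in\varphi_{d}(x)}\phi_{W}^{k}(y)+\frac{1}{4d-1}\phi_{W}^{k}(-x),
\end{equation*}
and the reflection identity $\phi_{W}^{k}(-x)=\phi_{W}^{k}(x)$, which you do establish correctly, reduces it to $\phi_{W}^{k}(x)=\frac{1}{2d-1}\sum_{y\in\varphi_{d}(x)}\phi_{W}^{k}(y)$. This is \emph{not} the $V$-equation, which at $x\in\varphi_{d}$ is $\phi_{V}^{k}(x)=\frac{1}{2d}\bigl[\phi_{V}^{k}(0)+\sum_{y\in\varphi_{d}(x)}\phi_{V}^{k}(y)\bigr]$ and is coupled to the extra relation $\phi_{V}^{k}(0)=\frac{1}{2d}\sum_{y\in\varphi_{d}}\phi_{V}^{k}(y)$. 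Your two errors cancel to manufacture an identification of linear systems that does not hold. The conclusion is nonetheless correct: using the full lattice symmetry (coordinate permutations and sign flips, not just $y\mapsto-y$) to see that $\phi_{V}^{k}$ and $\phi_{W}^{k}$ are constant on $\varphi_{d}$, both reduced scalar equations give $\frac{1}{2d-1}$, which is exactly what the paper obtains by a direct first-step computation. To repair your argument, solve each reduced system separately (or eliminate $\phi_{V}^{k}(0)$ from the $V$-system first and then compare), rather than asserting the Dirichlet problems coincide.
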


\begin{proof}
As mentioned above, from Lemma \ref{A} it follows that
\begin{equation}\label{J}
        J_{1}\cap J_{2}=\emptyset\;\;,\;\; J_{1}\cup J_{2}=\varphi_{d}^{2}.
\end{equation}
In addition, notice that $\tau_{V}$ and $\tau_{W}$ are finite stopping times.
Thus, (2) follows immediately from (1).\\
Assume $V_{0}^{N}\in\varphi_{d}$. Thus, using the first step analysis we obtain:
\begin{equation*}
    P(V_{\tau_{V}}\in J_{1})=\frac{1}{2d}+0\cdot\frac{2d-2}{2d}+\frac{1}{2d}P(V_{\tau_{V}}\in J_{1}) \Rightarrow P(V_{\tau_{V}}\in J_{1})=\frac{1}{2d-1}.
\end{equation*}
This holds since when $\{V_{t}^{N}\}_{t\ge 0}$ jumps from $V_{0}^{N}$, its location after the jump is uniformly distributed within the closest neighbours of $V_{0}^{N}$. Thus, with probability $\frac{1}{2d}$ it jumps to a point in $J_{1}$; with probability $\frac{2d-2}{2d}$ it jumps to a point in $J_{2}$ and with probability $\frac{1}{2d}$ it jumps to the origin (and then jumps back to $\varphi_{d}$).\\

Now consider $\{W_{t}^{N}\}_{t\ge 0}$. 
Assume $W_{0}^{N}=x\in\varphi_{d}$. Thus, again using the first step and (\ref{J}) we obtain:
\begin{equation*}
    P(W_{\tau_{V}}\in J_{1})=\frac{2}{4d-1}+0\cdot\frac{2(2d-2)}{4d-1}+\frac{1}{4d-1}P(W_{\tau_{V}}\in J_{1}) \Rightarrow P(W_{\tau_{V}}\in J_{1})=\frac{1}{2d-1}.
\end{equation*}
This holds since when $\{W_{t}^{N}\}_{t\ge 0}$ jumps from $x$, with rate $2N^{2}$ it jumps to $z_{x}\in J_{1}$;
with rate $N^{2}$ it jumps to $-W_{0}^{N}\in\varphi_{d}$
and with rate ${2N^{2}(2d-2)}$ it jumps to a point in $J_{2}$.


\end{proof}
 
In the next lemma we show that the expected times $V^{N}$ and $W^{N}$ spend in $\varphi_{d}^{\ell}$ are the same.
\begin{lemma}\label{W and V}
For every $l\in\{1,2,3,...\} $,
 \begin{equation*}
    \E\left[\int_{0}^{t}1_{\{V_{s}^{N}\in \varphi_{d}^{\ell}\}}ds\right]= \E\left[\int_{0}^{t}1_{\{W_{s}^{N}\in \varphi_{d}^{\ell}\}}ds \right].
 \end{equation*}
\end{lemma}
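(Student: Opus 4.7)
I would prove the lemma by an excursion / first-step analysis closely parallel to that of Lemma \ref{first_step_cond}. The key observation is that $Q^V$ and $Q^W$ agree on $\Zd \setminus \varphi_d$, so outside the near-origin region $\varphi_d \cup \{0\}$ the two processes admit a common-clock coupling and contribute identical occupation times; hence any discrepancy in the occupation time of $\varphi_d^\ell$ can only originate from the ``interior sojourns'' in $\varphi_d \cup \{0\}$ (for $V^N$) and $\varphi_d$ (for $W^N$).

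The plan is to establish, for each interior sojourn launched from a common entry point $x \in \varphi_d$, both (a) equality of the exit distribution on $\varphi_d^2$ and (b) equality of the expected time spent in $\varphi_d$. For (a), Lemma \ref{first_step_cond} already yields matching hitting probabilities on $J_1$ and $J_2$; combined with the invariance of both $Q^V$ and $Q^W$ under the hyperoctahedral symmetry of $\Zd$, this upgrades to equality of the full exit distributions on $\varphi_d^2 = J_1 \cup J_2$. For (b), a direct first-step computation gives that the number of $\varphi_d$-visits during an interior sojourn is geometric with parameter $(2d-1)/(2d)$ for $V^N$ (successive $\varphi_d$-visits separated by origin-visits terminate whenever the process jumps to $\varphi_d^2$) and geometric with parameter $(4d-2)/(4d-1)$ for $W^N$ (successive $\varphi_d$-visits separated by $x \leftrightarrow -x$ flips). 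Multiplying by the respective mean holding times $1/(4dN^2)$ and $1/((4d-1)N^2)$ yields $1/(2(2d-1)N^2)$ in both cases, establishing (b).

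With (a) and (b) in hand, one assembles the per-sojourn contributions via the strong Markov property. For $\ell \ge 2$, the set $\varphi_d^\ell$ is disjoint from $\varphi_d \cup \{0\}$, so interior sojourns contribute zero to the occupation time, and the claim reduces to (a) together with the identity of the exterior dynamics. For $\ell = 1$ each interior sojourn contributes the common expected duration provided by (b), and summing over sojourns completed in $[0,t]$ yields the claim.

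The main obstacle is reconciling the wall-clock mismatch between the two sojourn structures: a $V^N$ sojourn is on average longer than a $W^N$ sojourn by exactly the total mean time $V^N$ spends at the origin, so the number of sojourns completed by a fixed time $t$ is not identically distributed under the two processes. I would resolve this by a time-change argument that absorbs the origin holding times of $V^N$ (reducing $V^N$ to a process whose sojourn clock matches that of $W^N$), or equivalently by passing to the Laplace transform in $t$ and comparing the two resolvents on $\varphi_d^\ell$ using (a) and (b).
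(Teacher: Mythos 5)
Your route is the same as the paper's: decompose the path into sojourns in $\varphi_d\cup\{0\}$ (for $V^N$) resp. $\varphi_d$ (for $W^N$) and excursions outside, match the time spent in $\varphi_d$ per sojourn, match the exit law on $\varphi_d^2$ via Lemma \ref{first_step_cond}, and use the strong Markov property together with the fact that the two generators coincide off $\varphi_d$. Your computation in (b) is correct, and in fact more is true: the per-sojourn time in $\varphi_d$ is a geometric sum of i.i.d.\ exponentials and hence exponential with rate $4dN^2\cdot\frac{2d-1}{2d}=2(2d-1)N^2$ for $V^N$ and $(4d-1)N^2\cdot\frac{4d-2}{4d-1}=2(2d-1)N^2$ for $W^N$; this equality \emph{in distribution} is exactly what the paper imports from \cite{berezin2014asymptotic}. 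One correction to (a): the full exit distributions given a common entry point are \emph{not} equal (started at $e_1$, $W^N$ can only exit through neighbours of $\pm e_1$, while $V^N$ can exit through any point of $\varphi_d^2$); what is true, and all that the argument needs, is equality of the probabilities of the two orbits $J_1$ and $J_2$, because the exterior dynamics and the sets $\varphi_d^{\ell}$ are invariant under the hyperoctahedral group, so the future contribution depends on the exit point only through its orbit.

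The genuine gap is the step you yourself call the main obstacle, and neither proposed repair closes it. Deleting the origin holding times of $V^N$ changes the time horizon (it yields an inequality between occupation times up to a fixed $t$, not an identity), and the time-changed chain is not $W^N$ anyway (from $e_1$ it jumps to every other point of $\varphi_d$ at rate $N^2$, not only to $-e_1$). The resolvent route, if carried out, actually exhibits a discrepancy: with $u_X(x)=\E_x\int_0^\infty e^{-\lambda s}1_{\varphi_d^{\ell}}(X_s)\,ds$ one computes $(Q^V-Q^W)u_V(x)=2N^2\bigl[u_V(0)-u_V(e_1)\bigr]$ for $x\in\varphi_d$ and zero elsewhere, hence $u_V-u_W=2N^2\bigl[u_V(0)-u_V(e_1)\bigr]\,(\lambda-Q^W)^{-1}1_{\varphi_d}$, which is not identically zero (for $\ell=1$ and $\lambda$ large, $u_V(e_1)\approx 1/\lambda$ while $u_V(0)=o(1/\lambda)$). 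Concretely, for $\ell=1$ and a common start in $\varphi_d$, the exit rates from $\varphi_d$ are $4dN^2$ for $V^N$ versus $(4d-2)N^2$ for $W^N$, so $P(W^N_s\in\varphi_d)-P(V^N_s\in\varphi_d)=2N^2s+O(N^4s^2)$ and the two sides of the lemma differ at order $N^2t^2$: an exact identity at every fixed $t$ cannot be produced by any time change or Laplace argument. What the sojourn decomposition genuinely gives is equality of the total ($t=\infty$) expected occupation times, equivalently equality up to a clock-mismatch error that is negligible in the regime where the lemma is applied ($t=\tau_N$, $N\to\infty$, in the proof of Proposition \ref{varthetaprop}). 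Note that the paper's own two-line proof is silent on precisely this synchronization point, so your attempt reproduces its argument and, to your credit, makes the real difficulty explicit — but as written it does not establish the stated equality; a rigorous version should either prove the $t\to\infty$ (or asymptotic) statement or quantify the error.
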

\begin{proof}
It is shown in the proof of Lemma 3.4 in \cite{berezin2014asymptotic} that once $\{V_{t}^{N}\}_{t\ge 0}$ enters $\varphi_{d}$ the time $\{V_{t}^{N}\}_{t\ge 0}$ spends in $\varphi_{d}$ before leaving the set $\varphi_{d}\cup\{0\}$ has the same distribution as the time spent by $\{W_{t}^{N}\}_{t\ge 0}$ in $\varphi_{d}$ at any visit of this set.\\
By Lemma \ref{first_step_cond}, the strong Markov property and the fact that, in distribution, the behavior of these two processes is exactly the same outside of $\varphi_{d}$ the result follows.
\end{proof}
Let us state one more lemma (it is standard so its proof is omitted).
\begin{lemma}\label{discVScontRW}
Let $\{V_{s}\}_{s>0}$ be a continuous time symmetric random walk on $\Zd$ with jump rate $\lambda$ started at $V_{0}=0$. Let $\{D_{n}\}_{n>0}$ be a discrete time symmetric random walk on $\Zd$ started at $D_{0}=0$. Let $\pi(u)$ be a Poisson process with rate $1$ independent of $\{D_{n}\}_{n>0}$. Then, for any $l=1,2,...$ and for any $t>0$
\begin{equation*}
  \int_{0}^{t}{1_{\{V_{s}\in \varphi_{d}^{\ell}\}}ds}=
  \int_{0}^{t}{1_{\{D_{\pi(\lambda s)}\in \varphi_{d}^{\ell}\}}ds}.
\end{equation*}
\end{lemma}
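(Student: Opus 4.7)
The plan is to prove the identity by explicitly coupling the two sides, that is, by constructing $V_s$ on the same probability space as $D$ and $\pi$ so that the two integrands agree pathwise. This is the classical uniformization/subordination representation of a continuous-time nearest-neighbour random walk.

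First I would set $\tilde V_s := D_{\pi(\lambda s)}$ for $s \ge 0$, where $\{D_n\}_{n\ge 0}$ and the rate-$1$ Poisson process $\pi$ are the ones given in the statement (so $\pi$ is independent of $D$). The process $u\mapsto \pi(\lambda u)$ is a Poisson process of rate $\lambda$, whose jump epochs $T_1<T_2<\dots$ are exactly the times at which $\tilde V$ can move. Between these epochs $\tilde V$ is constant, and at the $k$-th epoch it jumps from $D_{k-1}$ to $D_k$, so the jump increments are i.i.d.\ copies of a single step of a symmetric nearest-neighbour random walk on $\Zd$, independent of the Poisson clock.

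Next I would verify that $\tilde V$ has the same law as $V$. By the memoryless property of the Poisson process and the independence of $D$ from $\pi$, the process $\tilde V$ is a time-homogeneous Markov chain on $\Zd$ whose generator acts on a function $g$ by $(\mathcal{L} g)(x) = \lambda \sum_{y\sim x}\tfrac{1}{2d}(g(y)-g(x))$. This is exactly the generator of a continuous-time symmetric nearest-neighbour random walk with jump rate $\lambda$ started at $0$, so $\tilde V \stackrel{d}{=} V$. Consequently, in distribution we may replace $V$ by $\tilde V$ inside the integral on the left-hand side.

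Under this coupling the identity becomes pathwise: for every $s$ one has $V_s = \tilde V_s = D_{\pi(\lambda s)}$, hence $\mathds{1}_{\{V_s\in\varphi_d^\ell\}} = \mathds{1}_{\{D_{\pi(\lambda s)}\in\varphi_d^\ell\}}$ for every $s\in[0,t]$, and integrating in $s$ over $[0,t]$ yields the claimed equality (which is the intended equality in distribution of the two integrals). The only non-routine point is the verification that $\tilde V$ is genuinely the desired Markov chain, which is the well-known uniformization argument; the author omits it precisely because this is the standard content of the lemma.
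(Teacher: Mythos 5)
The paper omits the proof of this lemma, labeling it as standard, and your proposal is precisely the standard uniformization (subordination) argument that justifies that label: set $\tilde V_s := D_{\pi(\lambda s)}$, observe that $u\mapsto\pi(\lambda u)$ is a rate-$\lambda$ Poisson clock independent of the i.i.d.\ symmetric steps, check via the generator (or via the strong Markov property at the Poisson epochs) that $\tilde V$ has the same finite-dimensional distributions as $V$, and conclude that the two path functionals $\int_0^t \mathds{1}_{\{\cdot\,\in\varphi_d^\ell\}}\,ds$ coincide in distribution. You also correctly flag the one subtlety in the statement, namely that the displayed equality must be read as equality in distribution (or pathwise under the explicit coupling $V_s=\tilde V_s$), since $V$ and $(D,\pi)$ need not live on the same probability space a priori; in the paper's actual use of the lemma only the one-dimensional marginal identity $P(L+V_s^N\in\varphi_d^\ell)=P(L+D_{\pi(4dN^2 s)}\in\varphi_d^\ell)$ is needed, and your argument delivers the stronger process-level statement. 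This is the same route the paper intends, so there is nothing further to compare.
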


\medskip\noindent
{We are now ready to give the proof of Proposition \ref{varthetaprop}}.
\begin{proof}[Proof of Proposition \ref{varthetaprop}\nopunct].
Recall that we start the process $\{\xi_{t}^{N}\}_{t\ge 0}$ with one particle denoted as particle $1$. If particle $1$ splits, $(1,0)$ and $(1,1)$ denote the two children of particle $1$.
Let $F_{1}=\{T_{1}^{c}<T_{1}^{d},T_{1}^{c}<\tau_{N},T_{(1,0)}>\tau_{N},T_{(1,1)>\tau_{N}\}}$ denote the event that particle $1$ splits before time $\tau_{N}$ and its two children are alive at time  $\tau_{N}$. In addition recall that $\tau_{N}=\frac{\ln(N)}{N^{2}}$. Then, we have
\begin{equation}
    P(F_{1})= \frac{N+\theta}{2N+\theta}e^{-(2N+\theta)\tau_{N}}(1-e^{-(2N+\theta)\tau_{N}}).
\end{equation}
Now, fix arbitrary $\ell$. Notice that $E(Z_{1}^{\ell}|F_{1}^{c})=0$. Thus, we only care about
    $P(B_{\tau_{N}}^{(1,0)}\underset{\ell}{\sim}B_{\tau_{N}}^{(1,1)}|F_{1})$.
Given $F_{1}$, the time at which particle 1 splits is uniformly distributed in $[0,\tau_{N}]$. Thus,
\begin{equation}\label{momm}
     E(Z_{1}^{\ell}|F_{1})=\frac{1}{\tau_{N}}\int_{0}^{\tau_{N}}P(L+W^{N}_{\tau_{N}-t}\in\varphi_{d}^{\ell})dt=\frac{1}{\tau_{N}}\int_{0}^{\tau_{N}}P(L+W_{s}^{N}\in\varphi_{d}^{\ell})ds
\end{equation}
where $L$ is the difference in position of the two children of particle 1 right after its split.\\
Apply Lemma \ref{W and V} on the right-hand side of (\ref{momm}) to get
\begin{equation*}
       E(Z_{1}^{\ell}|F_{1})=\frac{1}{\tau_{N}}\int_{0}^{\tau_{N}}P(L+V_{s}^{N}\in\varphi_{d}^{\ell})ds.
\end{equation*}
Now, let $\{D_{n}\}_{n\ge 1}$ be a simple (discrete) symmetric random walk on $\Zd$ independent of $\ell$ and started at $D_{0}=0$.  Let $\{\pi(s)\}_{s>0}$ be the Poisson process with rate $1$ defined on the same probability space and independent of $\{D_{n}\}_{n\ge 1}$ and $L$. Thus, since $\{V_{s}^{N}\}_{s\ge 0}$ is a continuous time symmetric random walk with jump rate $4dN^{2}$, by Lemma \ref{discVScontRW} we get
\begin{align*}
    E(Z_{1}^{\ell}|F_{1})=\frac{1}{\tau_{N}}\int_{0}^{\tau_{N}}P(L+D_{\pi(4dN^{2}s)}\in\varphi_{d}^{\ell})ds
=
      \frac{1}{\tau_{N}}\int_{0}^{4dN^{2}\tau_{N}}(4dN^{2})^{-1}P(L+D_{\pi(r)}\in\varphi_{d}^{\ell})dr.
\end{align*}
Overall we have
\begin{align}\label{Z}
    E(Z_{1}^{\ell}) =\frac{N+\theta}{2N+\theta}e^{-(2N+\theta)\tau_{N}}(1-e^{-(2N+\theta)\tau_{N}})\frac{1}{\tau_{N}}(4dN^{2})^{-1}\int_{0}^{4dN^{2}\tau_{N}}P(L+D_{\pi(r)}\in\varphi_{d}^{\ell})dr.
\end{align}
Taking $N$ to infinity, we get that
for every $\ell\ge 1$
\begin{align}\label{everyl}
    \lim_{N\to\infty}{NE(Z_{1}^{\ell})}&=(4d)^{-1}\int_{0}^{\infty}P(L+D_{\pi(r)}\in\varphi_{d}^{\ell})dr
    \nonumber\\&=
    (4d)^{-1}\sum_{n=0}^{\infty}P(L+D_{n}\in\varphi_{d}^{\ell}).
\end{align}
where the second equality follows since the times between jumps of $D_{\pi(r)}$ are exponential with mean $1$.\\
Denote \begin{equation}\label{Theta}
    \Theta=\liminf_{N\to\infty}\sum_{\ell}{\frac{p_{\ell}}{h_{\ell}}2N\E[Z_{1}^{\ell}]}.
    \end{equation}
From (\ref{Z}), it is easy to check that $N\E[Z_{1}^{\ell}]\le (1+\theta)\sum_{n=0}^{\infty}P(L+D_{n}\in \varphi_{d}^{\ell})$ for all $N\ge 1$. Thus, by the Dominated Convergence theorem we get
    \begin{equation*}
    \Theta=
    2\sum_{\ell}{\frac{p_{\ell}}{h_{\ell}}\liminf_{N\to\infty}N\E[Z_{1}^{\ell}]}.
\end{equation*}
Applying (\ref{everyl}) on the right-hand side of the last term we get
\begin{align*}
\Theta=
    2\sum_{\ell}\frac{p_{\ell}}{h_{\ell}}(4d)^{-1}\sum_{n=0}^{\infty}P(L+D_{n}\in\varphi_{d}^{\ell})
    =
    (2d)^{-1}\sum_{\ell}\frac{p_{\ell}}{h_{\ell}}\sum_{n=0}^{\infty}\sum_{y\in\varphi_{d}^{\ell} }P(L+D_{n}= y).
\end{align*}
By Tonelli's theorem for non-negative functions and the total probability law we get
\begin{align}\label{dadd}
    \Theta=&(2d)^{-1}\sum_{\ell}\frac{p_{\ell}}{h_{\ell}}\sum_{y\in\varphi_{d}^{\ell}}\sum_{n=0}^{\infty}P(L+D_{n}= 
    (2d)^{-1}\sum_{\ell}\frac{p_{\ell}}{h_{\ell}}\sum_{y\in\varphi_{d}^{\ell}}\sum_{n=0}^{\infty}\sum_{x\in \Zd}P(x+D_{n}= y)P(L=x).
\end{align}
Let $P^{n}(x,y)=P(D_{n}=y|D_{0}=x)$ be the probability that a discrete time symmetric random walk on $\Zd$, started at site $x$, lands on site $y$ after $n$ steps. Recall also that $P(L=x)=P^{b}(0,x)$.  Thus, we get 
\begin{equation*}
    \Theta=
    (2d)^{-1}\sum_{\ell}\frac{p_{\ell}}{h_{\ell}}\sum_{y\in\varphi_{d}^{\ell}}\sum_{n=0}^{\infty}\sum_{x\in \Zd}P^{n}(x,y)P^{b}(0,x).
\end{equation*}
By Tonelli's theorem for non-negative functions, and the definition of Green's function we get
\begin{align*}
    \Theta&=(2d)^{-1}\sum_{x\in \Zd}P^{b}(0,x)\sum_{\ell}\sum_{y\in\varphi_{d}^{\ell}}\frac{p_{\ell}}{h_{\ell}}G(x,y).
\end{align*}
By the definition of $P^{b}(0,y)$ we have
\begin{equation*}
     \Theta =
    (2d)^{-1}\sum_{x\in \Zd}\sum_{y\in \Zd}P^{b}(0,x)P^{b}(0,y)G(x,y),
\end{equation*}
and we are done.
\end{proof}

\bibliographystyle{plain}
\bibliography{ref}

\begin{thebibliography}{1}

\bibitem{berezin2014asymptotic}
Roman Berezin and Leonid Mytnik.
\newblock Asymptotic behaviour of the critical value for the contact process
  with rapid stirring.
\newblock {\em Journal of Theoretical Probability}, 27(3):1045--1057, 2014.

\bibitem{durrett1994particle}
Richard Durrett and C~Neuhauser.
\newblock Particle systems and reaction-diffusion equations.
\newblock {\em The Annals of Probability}, pages 289--333, 1994.

\bibitem{durrett1999rescaled}
Richard Durrett and Edwin~A Perkins.
\newblock Rescaled contact processes converge to super-{B}rownian motion in two
  or more dimensions.
\newblock {\em Probability Theory and Related Fields}, 114(3):309--399, 1999.

\bibitem{katori1994rigorous}
Makoto Katori.
\newblock Rigorous results for the diffusive contact processes in $d\ge 3$.
\newblock {\em Journal of Physics A: Mathematical and General}, 27(22):7327,
  1994.

\bibitem{konno1995asymptotic}
Norio Konno.
\newblock Asymptotic behavior of basic contact process with rapid stirring.
\newblock {\em Journal of Theoretical Probability}, 8(4):833--876, 1995.

\bibitem{levit2017improved}
Anna Levit, Daniel Valesin, et~al.
\newblock Improved asymptotic estimates for the contact process with stirring.
\newblock {\em Brazilian Journal of Probability and Statistics},
  31(2):254--274, 2017.

\bibitem{schinazi2012classical}
R.B. Schinazi.
\newblock {\em Classical and Spatial Stochastic Processes}.
\newblock Birkh{\"a}user Boston, 2012.

\bibitem{mythesis}
Segev Shlomov.
\newblock {Interacting Particle Systems with Rapid Stirring}.
\newblock {\em Technion, Master's Thesis}, \\
  \url{https://technion.primo.exlibrisgroup.com/discovery/fulldisplay?docid=alma9926500284503971&context=L&vid=972TEC_INST:972TEC_V1&search_scope=MyInstitution&tab=LibraryCatalog&lang=he},
  2017.

\end{thebibliography}

\end{document}